\newcommand{\ra}{\rightarrow}
\newcommand{\R}{\mathbb{R}}
\newcommand{\Z}{\mathbb{Z}}
\newcommand{\ee}{\end{eqnarray}}
\newcommand{\be}{\begin{eqnarray}}
\newtheorem{thm}{Theorem}[section]
\newtheorem{lem}[thm]{Lemma}
\newtheorem{definition}[thm]{Definition}
\numberwithin{equation}{section}
\begin{document}

\title[Homotopical Minimal Measures on Surfaces of Higher Genus]{Homotopical Minimal Measures for Geodesic flows on Surfaces of Higher Genus}
\author{Fang Wang}
\address{School of Mathematical Sciences, Capital Normal University, Beijing, 100048, China.}
\email{fangwang@cnu.edu.cn}

\author{Zhihong Xia}
\address{Department of Mathematics, Northwestern University, Evanston, IL 60208, USA.}
\email{xia@math.northwestern.edu}

\begin{abstract} We study the homotopical minimal measures for positive definite autonomous Lagrangian systems. Homotopical minimal measures are action-minimizers in their homotopy classes, while the classical minimal measures (Mather measures) are action-minimizers in homology classes. Homotopical minimal measures are much more general, they are not necessarily homological action-minimizers. However, some of them can be obtained from the classical ones by lifting them to finite-fold covering spaces. We apply this idea of finite covering to the geodesic flows on surfaces of higher genus. Let $(M,G)$ be a compact closed surface with genus $g>1$, where $G$ is a complete Riemannian metric on $M$. Consider the positive definite autonomous Lagrangian $L(x,v)=G_x(v,v)$, whose Lagrangian system $\phi_t: TM\ra TM$ is exactly the complete geodesic flow on $TM$.  We show that for each homotopical minimal ergodic measure $\mu$ that is supported on a nontrivial simple closed periodic trajectory, there is a finite-fold covering space $M'$ such that each ergodic preimage of $\mu$ on $TM'$ is a minimal measure in the classic Mather theory for the Lagrangian system on $TM'$.

\end{abstract}

\maketitle
\section{INTRODUCTION}\label{chapter1}

Suppose $(M,G)$ is a closed
compact connected orientable manifold with a complete Riemannian metric $G$. Let $L: TM \rightarrow \mathbb{R}$ be a smooth function, called the
Lagrangian, which satisfies the following three conditions:

\begin{enumerate}
\item {\em Positive definiteness}\/: For each $x \in M$, and $v \in T_xM$, the restriction of $L$ to $T_xM$ is strictly
convex, in the sense that its Hessian second derivative $(\partial^2 L(x, v)/ \partial v_i \partial v_j)$ is everywhere positive definite.

\item {\em Super-linear Growth}\/: $$\frac{L(x, v)}{\| v \|} \rightarrow
+\infty, \; \mbox{ as } \; \| v \| \rightarrow \infty$$ for all $x\in M$, where $\| \cdot \|$ is
the Riemannian metric on $M$.

\item {\em Completeness of the Euler-Lagrange flow}\/: All the
solutions of the Euler-Lagrange equation can be extended to all $t
\in \mathbb{R}$.
\end{enumerate}
\vspace{2ex}

The (autonomous) Euler-Lagrange flow is a smooth flow $\varphi_{t} : TM\hookleftarrow$ generated by the Euler-Lagrange equation, which is, in local coordinates:
$$\frac{d}{dt} \frac{ \partial L}{ \partial \dot{x}}(x, \dot{x}) =
\frac{\partial L}{ \partial x}(x, \dot{x}).$$
The Euler-Lagrange equation is a first-order ordinary differential equation on $TM$, which defines a vector field on $TM$ and then generates the flow $\varphi_{t} : TM\hookleftarrow$.
By the assumption that the Lagrangian $L$ satisfies conditions (1-3), the flow $\varphi_{t}$ is usually called a positive definite Euler-Lagrange flow or a positive definite Lagrangian system.\\

The geodesic flow is a primary example of positive definite Lagrangian system. Let $$L(x,v)=G_x(v,v).$$ One can check that $L$ satisfies the three conditions in the above, and the corresponding Euler-Lagrange equation is equivalent to the geodesic equation.  Therefore a geodesic flow is a positive definite Euler-Lagrange flow.  The geodesic flows, especially the geodesic flows on manifolds with negative or non-positive curvatures, are very typical conservative dynamical systems, and often used as touchstones to validate new theories and methods in general conservative systems. In this work, we will follow this traditional technical route, and employ our ideas in the study of the minimal measures in the homotopical version to the geodesic flows on compact surfaces with higher genus.  Some conjectures proposed in our previous work on the Mather theory will be show to valid for geodesic flows on surfaces.    \\

Mather theory studies the action-minimizing trajectories and minimal measures for positive definite Lagrangian systems on compact manifolds. In the theory of analytic mechanics, a basic principle is that a true trajectory of
a mechanical system must be an extremal of action, i.e., a trajectory
$\gamma(t)$ must satisfy the following fixed endpoints variational equation:
$$\delta \int_a^b L(\gamma(t), \gamma'(t))~dt =0.$$
It is a easy to check that these true trajectories are all solutions of the
Euler-Lagrange equation. Mather theory was established exactly based on this variational principle. \\

The basic idea in Mather theory is to classify the invariant measures with their associated first homology class or cohomology class (cf.~\cite{Mat2}). Accordingly, the action-minimizing trajectories defined in the Mather theory are the trajectories obtained by consider the fixed endpoint variational equation under a homologous constrain. The advantage of using the first homology to classify the invariant measures is that the first homology group $H_1(M,\mathbb{R})$ is a vector space, on which we can do calculations. However, when the manifold $M$ has complicated topological structure, for example when the fundamental group $\pi_1(M,x)$ is not commutative, the Lagrangian systems defined on $TM$ may have richer dynamics which are not completely discovered by Mather theory. This is the reason we expect to consider a more general framework of action-minimizers and minimal measures.

For the purpose of exploring the dynamical complexity of Lagrangian systems on manifolds with complicated topology, the fundamental group seems to be a suitable candidate for classifying invariant measures. There have been many evidences suggesting that the complicity in the topology of $M$, especially the complicated structure of the fundamental group $\pi_1(M,x)$, leads to the complicity in dynamics for Lagrangian systems on $M$. For example, in \cite{Dina}, Dinaburg showed that, for geodesic flows on the complete Riemannian manifold, if the fundamental group of the manifold has exponential growth, then the topological entropy of the geodesic flow is positive. A lower bound of the topological entropy of the geodesic flow is given in terms of the algebraic growth rate of the fundamental group. This result was also proven to be valid for general autonomous positive definite Lagrangian systems (cf.~\cite{LWW}). This implies that a substantial part of the complicity in dynamics of the autonomous Lagrangian systems is contributed by the topological complicity of $M$ relating to the fundamental group.  Moreover, we also notice that for many compact manifolds, such as  manifolds which admit Riemannian metrics with negative sectional curvatures, the topology of the manifolds can be uniquely determined by the fundamental groups (cf.~\cite{FJ}). It is very promising that by studying minimal measures in the homotopical version, we can obtain more comprehensive information of the Lagrangian systems. \\

The disadvantage of using $\pi_1(M,x)$ instead of $H_1(M,\mathbb{R})$ to classify invariant measures, is that there is no vector space structure on $\pi_1(M,x)$ when it is non-commutative. This creates a significant obstacle when we try to use the existing results and techniques in the classical Mather theory. To overcome this difficulty, in \cite{WX}, we propose a method of considering the lifting the Lagrangian systems to some suitable finite covering spaces, and in this way, "transform" the minimality in the homotopical version to the minimality in the homological version on the covering spaces. It has been proven in \cite{WX} that it is possible to transform some minimal ergodic measures in the homotopical version to Mather's minimal measures through finite lifting. We conjecture that (under suitable conditions) a minimal ergodic measure in the homotopical version can either be lifted to a Mather measure on some finite-fold covering space, or be a limit of a sequence  of invariant measures that are projections of Mather measures on finite-fold covering spaces.\\

In this article, we will show that our conjecture is at least partially valid for geodesic flows on surfaces of higher genus. We will prove every homotopical minimal ergodic measure distributed on a closed trajectory can be lifted to a Mather's minimal measure on a finite covering space. We have the following Theorem.

\begin{thm}[Main Theorem]\label{main theorem}
Suppose $(M,G)$ is a compact closed surface of genus greater than $1$, which possesses a complete Riemannian metric $G$. Let $\varphi_{t}$ be the positive definite Lagrangian system on $TM$ generated by the  Lagrangian $L(x,v)=G_x(v,v),$ i.e. $\varphi_{t}$ is the complete geodesic flow defined on $TM$. For each minimal ergodic measure $\mu$ in the homotopical version which is distributed on a closed trajectory, there is a finite-fold covering space $M'$ with covering map $p: M' \ra M$ such that every ergodic preimage of $\mu$ on $T M'$ is a minimal measure in Mather theory for the lifted Lagrangian systems generated by $L'=L\circ dp$.
\end{thm}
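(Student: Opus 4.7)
The plan is to translate the homotopical minimality of $\mu$ on $M$ into homological (Mather) minimality on a carefully chosen finite cover $M' \to M$, by exploiting the subgroup-separability (LERF) property of the fundamental group of a closed orientable surface of genus $g > 1$ proved by Scott. Throughout, $\mu$ is supported on a simple closed geodesic $\gamma \subset M$ of length $\ell$, representing a primitive conjugacy class $[c] \subset \pi_1(M, x_0)$, and $\gamma$ realizes the infimum of length in its free homotopy class by the assumption of homotopical minimality. The first reduction is the following: if the lift $\gamma'$ of $\gamma$ to some finite cover $M'$ realizes the infimum of length on its integer homology class $[\gamma'] \in H_1(M', \mathbb{Z})$, then $\mu_{\gamma'}$ is a Mather minimal measure for some cohomology class on $M'$. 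This follows because $L = G_x(v,v)$ is strictly convex and positively homogeneous of degree two, so Mather minimizers are characterized by realizing the stable norm on $H_1(M', \mathbb{R})$.

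Next, I would construct $M'$ using Scott's theorem. By the compactness of $M$, there are only finitely many closed geodesics $\gamma = \eta_0, \eta_1, \ldots, \eta_N$ on $M$ of length at most $\ell$, hence only finitely many primitive conjugacy classes $[c_i]$ and small powers $c_i^k$ with $k \cdot \ell(\eta_i) \leq \ell$ can lift to closed curves in $M'$ that potentially shorten $\gamma'$. For each such $c_i^k$ not conjugate to a power of $c$, LERF applied to $\langle c \rangle$ produces a finite-index subgroup $H_{i,k} \leq \pi_1(M, x_0)$ containing $\langle c \rangle$ but omitting $c_i^k$; setting $H = \bigcap_{i,k} H_{i,k}$ and $M' = M_H$ yields a finite cover in which no shorter competitor lifts to a closed curve of length at most $\ell$. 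In particular $\gamma'$ is the unique closed geodesic on $M'$ of length $\ell$ in its free homotopy class.

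The final step is to upgrade this free-homotopy statement to a homology statement: one needs $[\gamma'] \neq [\eta']$ in $H_1(M', \mathbb{Z})$ for every lifted competitor $\eta'$. Since two closed geodesics can be homologous without being freely homotopic, I would further refine $H$ by intersecting with the kernel of the mod-$n$ abelianization $\pi_1(M) \to H_1(M, \mathbb{Z}/n\mathbb{Z})$ for $n$ large, possibly iterating once on $M'$ itself, so that the finitely many remaining homological competitors are pushed into classes distinct from $[\gamma']$. Then $\gamma'$ minimizes length on its $\mathbb{Z}$-homology class, $\mu_{\gamma'}$ is Mather minimal, and since $L' = L \circ dp$ is Deck invariant every ergodic component of the preimage of $\mu$ on $TM'$ (being a Deck translate of $\mu_{\gamma'}$) is also Mather minimal.

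The main obstacle is exactly the passage from $\pi_1$-separation, which LERF gives directly, to $H_1$-separation, which Mather theory requires. Abelianization can collapse distinct free homotopy classes into a common homology class on $M'$, so one must combine Scott's argument with a deep enough abelian cover to separate the finitely many potential homological competitors simultaneously. I expect this to be the technical heart of the proof and to require quantitative control on how the stable norm interacts with the covering map $p: M' \to M$, e.g., the inequality $\|p_* h'\|_{s,M} \leq \|h'\|_{s,M'}$ together with the finiteness of closed geodesics of bounded length.
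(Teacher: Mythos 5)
Your high-level strategy matches the paper's: reduce to showing that on a suitable finite cover $\gamma$ lifts to a curve that realizes the stable-norm infimum in its $\mathbb{Z}$-homology class, and then use the correspondence (Lemma~\ref{measure-partitions}~(3) in the paper; your ``stable norm'' remark) between such minimizers and Mather minimal measures. Your mechanism for building the cover is genuinely different: you invoke Scott's LERF theorem for surface groups, while the paper constructs explicit $\mathbb{Z}/2$-covers by hand using the exponent-sum characters $\chi_t$ on a standard genus-$g$ presentation of $\pi_1(M,x)$ (Lemmas~\ref{nontrivial geodesic} and~\ref{shortest geodesic}) and then a further finite cover for partitions (Lemma~\ref{shortest partition}). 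LERF is the heavier tool and is philosophically cleaner, but the paper's covers are elementary and, crucially, the paper \emph{proves} the homological nontriviality of the lifted curve by a direct connectivity argument ($M_1 - l_1$ is connected). Notably, your suggestion to use an abelian (mod-$n$) cover is actually in the same spirit as the paper's Lemma~\ref{nontrivial geodesic}, since the $\chi_t$-parity subgroup there is exactly the kernel of a $\mathbb{Z}/2$-character factoring through $H_1(M;\mathbb{Z}/2)$; but the point is to verify it works, which you have not done.

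There are two genuine gaps beyond that. First, your LERF/length-census step only eliminates \emph{single} closed geodesics $\eta_i$ of length $\le \ell$. On a surface the stable-norm infimum in a homology class is realized by a (possibly disconnected, possibly with multiplicity) union of simple closed geodesics --- a disjoint partition in the paper's terminology --- so you must also rule out competitors of the form $\eta_1 \cup \cdots \cup \eta_m$ with $\sum |\eta_i| < \ell$ that become homologous to $\gamma'$ on the cover. The paper handles this separately (Lemma~\ref{shortest partition} and the accompanying finite-cover step); your construction does not address it at all. Second, the passage from $\pi_1$-separation (LERF gives $c_i^k \notin H_{i,k}$) to $H_1$-separation on the cover is exactly what makes the theorem nontrivial, and you explicitly flag it as unfinished. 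Merely intersecting with $\ker(\pi_1(M)\to H_1(M;\mathbb{Z}/n))$ is a plausible move, but you give no argument that the finitely many homologically competing curves become non-homologous to $\gamma'$ upstairs; commutator elements survive abelianization, so one must actually compute on the cover, which the paper does by cutting and checking connectivity. As written, this step and the multicurve step are missing, and both are essential.
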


We remark that some of the techniques used in the proof are from our previous works \cite{W} and \cite{WX}. We will recall all the key results we need, in Chapter \ref{chapter4} and \ref{chapter5}. Moreover, a discussion of non-periodic minimal measures in the homotopical version  for geodesic flows on surfaces with negative or non-positive curvature will be presented in the last chapter of this article. \\

\section{MATHER THEORY}\label{chapter2}

In this chapter, we give a short introduction to the classic Mather's theory for positive definite autonomous Lagrangian systems on compact manifolds.
Suppose $M$ is a smooth compact Riemannian manifold without boundary, and $L: TM\ra \R$ is a $C^2$ Lagrangian function which satisfies the three conditions in Chapter \ref{chapter1}. Let $\varphi_{t} : TM\hookleftarrow$ be the Euler-Lagrange flow generated by $L$. \\

For an absolutely continuous curve $\gamma: [a,b]\ra M$, $b>a$, we define its action $A_L(\gamma)$ in the following way:
\begin{equation*}
A_L(\gamma)=\int_a^b L(\gamma(t),\gamma'(t))~dt.
\end{equation*}

\begin{definition}[Action-mimimizer]
An absolutely continuous curve $\gamma: [a,b]\ra M$ is said to be an \emph{action-minimizer} if for any absolutely continuous curve $l: [a,b]\ra M$, with $l(a)=\gamma(a)$ and $l(b)=\gamma(b)$
and homologous to $\gamma$ relative to endpoints, we have $$A_L(\gamma)\leq A_L(l).$$ An absolutely continuous curve $\gamma: \R \ra M$ is said to be an \emph{action-minimizer} if any finite segment of $\gamma$ is an action-minimizer.
\end{definition}
Obviously, if  $\gamma: [a,b]\ra M$ is a an action-minimizer, it should satisfy the variational principle: $$\delta \int_a^b L(\gamma(t), \gamma'(t))~dt =0,$$ and therefore it is a solution of the Euler-Lagrange equation (so $\gamma$ is automatically $C^2$).  Therefore, if $\gamma: \R\ra M$ is an action-minimizer, then it is smooth and $$(\gamma,\gamma'):\R \ra TM$$ is a true trajectory of the Lagrangian system. We call $(\gamma,\gamma')$ an \emph{action-minimizing trajectory}. We remark that, the existence of action-minimizers subjected to the fixed endpoints for positive definite Lagrangian is guaranteed by the Tonelli Theorem (cf.~\cite{Mat2}). For this reason, the action-minimizers are often called Tonelli minimizers in some references.\\

The main idea of Mather theory in studying invariant measures is to classify  measures with their associated homology or cohomology classes. Suppose $\mu$ be a $\varphi_{t}$ invariant Borel probability measure on $TM$. We define its {\em average action} $A_L(\mu)$ as
$$A_L(\mu) = \int_{TM} L\ d\mu.$$ This integral is bounded below (since $L$ is bounded below) and can be
$+\infty$. If $A_L(\mu)$ is finite, we can define its {\em rotation vector}\/: $\rho(\mu) \in H_1(M, \mathbb{R})$.
\begin{definition}[Rotation vector]
For each invariant probability measure $\mu$ with finite action, the rotation vector $\rho(\mu)\in H_1(M,\R)$ is the unique real first homology class satisfying that, for every $c\in H^1 (M,\R)$ and closed
$1$-form $\lambda_c$ whose cohomology class is $c$, $$<c, \rho(\mu)> \; = \int_{TM} \lambda_c\ d\mu.$$
Here the bracket $<\cdot , \cdot>$ is the canonical pairing of $H^1(M, \mathbb{R})$
and $H_1(M, \mathbb{R})$.
\end{definition}

We remark that the integral $\int_{TM} \lambda_c\ d\mu$ does not depend on the choice of the 1-form $\lambda_c$, provided $\mu$ is an invariant probability measure, since the integral of exact 1-form with respect to invariant probability measure is always zero (cf.~\cite{Mat2}).\\

Based on the Birkhoff theorem, we can see that the rotation vector $\rho(\mu)$ of an invariant measure $\mu$ describes the average of the asymptotic speed and direction, indicated by the first homology, over generic trajectories.  In \cite{Mat2}, Mather showed that for every $h\in H_1(M,\R)$, there are invariant measures with finite action whose rotation vectors are $h$. Moreover, there is at least one invariant measure $\mu$ with $\rho(\mu)=h$, whose action is minimal among all invariant measures with rotation $h$. We denote this minimal action $\beta(h)$ for each $h\in H_1(M,\R)$.  In this way we defines a real value function $\beta=\beta_L: H_1(M,\R) \ra \R$. This is the so called \emph{Beta function}. It can be shown that $\beta(h)$ is a continuous convex function with sup-linear growth on $H_1(M,\R)$. For each $h$, we call $\beta(h)$ the {\em minimal average action}\/ of the rotation vector
$h$, and call the measure $\mu$ whose rotation vector is $h$ and action is qual to $\beta(h)$ a minimal measure. This is the following definition:

\begin{definition}[Minimal Measure]
An invariant probability measure which achieves the minimal average action of its rotation vector is called a minimal measure.
\end{definition}
For each $h\in H_1(M, \mathbb{R})$, we use $\mathfrak{M}_h$ to denote the set of all minimal measures with rotation vector $h$. We emphasize that $\mathfrak{M}_h\neq \emptyset$ for every $h\in H_1(M, \mathbb{R})$. Denote $$\mathcal{M}_L=\bigcup_{h\in H_1(M, \mathbb{R})}\mathfrak{M}_h,$$ which is the set of all minimal measure of the Lagrangian systems generated by the Lagrangian function $L$. \\

Mather exhibited many important properties of minimal measures in \cite{Mat2}, Here we list some of them, which will be used in the following.
\begin{enumerate}
\item (Lipschitz Graph Property) The support of a minimal measure can be expressed as the graph of a Lipschitz map from a subset of $M$ to $TM$.
\item All trajectories in the support of a minimal measure are action-minimizing trajectories.
\item For each $h\in H_1(M,\R)$, there is a minimal measure with rotation vector $h$ which is the weak-* limit of a sequence of probability measures evenly distributed on segments of action-minimizing trajectories.
\end{enumerate}

\vspace{.5cm}

\section{MINIMAL MEASURES IN THE HOMOTOPICAL VERSION}\label{chapter3}
From now on, we focus on the case that the compact manifold $M$ has a non-commutative fundamental group. In this case, $\pi_1(M,x)$ is not isomorphic to the first homology group $H_1(M,\Z)$. Therefore, the Abelian covering space $\tilde{M}$ is different from the universal covering space $\bar{M}$. Recall that, in Mather theory, an action-minimizers is defined to have minimal action among all absolutely continuous curves homologous to it subjected to the end points. So the set of action-minimizers is obviously a small subset of all trajectories. To get more information of the dynamics, we need to extend Mather's definition of action-minimizer and consider a more general class of "action-minimizers" which admits much more trajectories. It is a very natural idea is to consider the action-minimizer in the homotopical version. In \cite{WX} we defined the action-minimizer in the homotopical version in the following way:

\begin{definition}[Action-minimizer in the homotopical version]
An absolutely continuous curve $\gamma: [a,b]\ra M$ is said to be an action-minimizer in the homotopical version if for any absolutely continuous curve $l: [a,b]\ra M$, with $l(a)=\gamma(a)$ and $l(b)=\gamma(b)$
and homotopic to $\gamma$ relative to endpoints, we have $$A_L(\gamma)\leq A_L(l).$$ An absolutely continuous curve $\gamma: \R \ra M$ is said to be an action-minimizer in the homotopical version if any finite segment of $\gamma$ is an action-minimizer in the homotopical version.
\end{definition}

It is also easy to check that an action-minimizer in the homotopical version satisfies the  fixed endpoints variational principle and therefore is a solution of the Euler-Lagrange equation. So if $\gamma: \R \ra M$ is said to be an action-minimizer in the homotopical version, then $(\gamma, \gamma')$ is a true trajectory of the Lagrangian system. Similarly, we call this type of  trajectories the action-minimizing trajectories in the homotopical version.
It is easy to see that when $\pi_1(M,x)$ is non-commutative, the set of action-minimizing trajectories in the homotopical version is strictly larger than the set of Mather's action-minimizing trajectories. The set of action-minimizing trajectories in the homotopical version can be very large. For example, for the geodesic flows on a compact manifolds with negative curvature, all trajectories are action-minimizing in the homotopical version, in contrast to the fact that only a few trajectories are action-minimizing in Mather's definition.\\

Speaking of the invariant measures, we also defined in \cite{WX} the \emph{minimal measures in the homotopical version}. This definite is given by describing the action-minimizing property of the trajectories in the supports of measures.
\begin{definition}
An invariant probability Borel measure $\mu$ is called a minimal measure in the homotopical version or a homotopical minimal measure, if all trajectories in its support are action-minimizing trajectories in the homotopical version.
\end{definition}

 We denote the set of all minimal measures in the homotopical version as $\mathcal{M}'_{L}$. We can see that the set $\mathcal{M}'_{L}$ is closed under the weak-* topology.
\begin{lem}
$\mathcal{M}'_{L}$ is a convex closed set in the space of invariant probability measures on $TM$.
\end{lem}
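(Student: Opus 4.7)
The plan is to verify the two properties separately, since they are essentially independent.

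\textbf{Convexity} is nearly immediate. Let $\mu_1, \mu_2 \in \mathcal{M}'_L$ and $t \in [0,1]$, and set $\mu = t \mu_1 + (1-t) \mu_2$. For $t \in \{0,1\}$ there is nothing to prove. For $t \in (0,1)$, one has $\mathrm{supp}(\mu) = \mathrm{supp}(\mu_1) \cup \mathrm{supp}(\mu_2)$, so every orbit lying in $\mathrm{supp}(\mu)$ already lies in $\mathrm{supp}(\mu_i)$ for some $i$, and hence is an action-minimizing trajectory in the homotopical version by hypothesis on $\mu_i$. Invariance of $\mu$ is also preserved by convex combinations, so $\mu \in \mathcal{M}'_L$.

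\textbf{Closedness} is the substantive part. Let $\mu_n \in \mathcal{M}'_L$ converge in the weak-$*$ topology to an invariant probability measure $\mu$. Fix $(x,v) \in \mathrm{supp}(\mu)$; I need to show that the orbit $\varphi_t(x,v) = (\gamma(t),\gamma'(t))$ is action-minimizing in the homotopical version, i.e.\ that for every $[a,b]$, $\gamma|_{[a,b]}$ has minimal action in its rel-endpoints homotopy class. First, since every open neighborhood $U$ of $(x,v)$ satisfies $\liminf_n \mu_n(U) \geq \mu(U) > 0$, I can extract a sequence $(x_n,v_n) \in \mathrm{supp}(\mu_n)$ with $(x_n,v_n) \to (x,v)$. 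Writing $\gamma_n$ for the base projection of the orbit through $(x_n,v_n)$, smooth dependence on initial conditions gives $\gamma_n \to \gamma$ in $C^1$ on every compact interval $[a,b]$, and continuity of the Lagrangian yields $A_L(\gamma_n|_{[a,b]}) \to A_L(\gamma|_{[a,b]})$.

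Next, I transport competitors: given any absolutely continuous $l:[a,b] \to M$ with $l(a) = \gamma(a)$, $l(b) = \gamma(b)$, and $l$ homotopic to $\gamma|_{[a,b]}$ rel endpoints, I construct a competitor $l_n$ for $\gamma_n|_{[a,b]}$ by prepending a short minimizing geodesic $\sigma_n^-$ from $\gamma_n(a)$ to $\gamma(a)$ and appending a short $\sigma_n^+$ from $\gamma(b)$ to $\gamma_n(b)$, then reparametrizing the concatenation $\sigma_n^- \cdot l \cdot \sigma_n^+$ over $[a,b]$. Because $\gamma_n(a) \to \gamma(a)$ and $\gamma_n(b) \to \gamma(b)$, the bridges $\sigma_n^\pm$ have length tending to $0$ and contribute vanishing action. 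Applying the minimality hypothesis for $\gamma_n$ yields $A_L(\gamma_n|_{[a,b]}) \leq A_L(l_n) = A_L(l) + A_L(\sigma_n^-) + A_L(\sigma_n^+)$, and letting $n \to \infty$ gives the desired inequality $A_L(\gamma|_{[a,b]}) \leq A_L(l)$. Since $[a,b]$ and $l$ were arbitrary, $\gamma$ is action-minimizing in the homotopical version, and hence $\mu \in \mathcal{M}'_L$.

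\textbf{The main obstacle} is the homotopy bookkeeping in the comparison step: one must verify that the concatenated curve $l_n$ is genuinely homotopic to $\gamma_n|_{[a,b]}$ rel endpoints for all large $n$, and not merely to $\gamma|_{[a,b]}$ with matching endpoints. This uses the standard fact that in a manifold, two $C^0$-close curves with the same endpoints are homotopic rel endpoints (take a finite cover by simply connected neighborhoods along $\gamma$ and interpolate linearly in each chart); together with the fact that the short bridges $\sigma_n^\pm$ lie in contractible neighborhoods of $\gamma(a), \gamma(b)$ for $n$ large. Once this fact is in hand, the rest of the argument is standard upper/lower semicontinuity of the action functional under $C^1$-convergence of curves, which is a routine consequence of Tonelli's theorem stated earlier.
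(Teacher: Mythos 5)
Your proof is correct and arrives at the same ingredients as the paper, but it is organized in the contrapositive direction. Both convexity arguments are identical. For closedness, the paper argues by contradiction: if $\mu$ is not homotopically minimal, it fixes a segment $\gamma_0 = \gamma|_{[a,b]}$ in $\mathrm{supp}(\mu)$ that loses by $\delta>0$ to a Tonelli competitor $l_0$, shows via uniqueness of the Euler--Lagrange Cauchy problem that $(\gamma_0(a),\gamma_0'(a))$ and $(l_0(a),l_0'(a))$ lie in disjoint neighborhoods, and then uses continuity of the minimal action in the endpoints to conclude that \emph{every} trajectory through a small neighborhood $\mathbf{U}$ of $(\gamma(a),\gamma'(a))$ still fails to minimize; since $\mu_n(\mathbf{U})>0$ for large $n$, this contradicts $\mu_n\in\mathcal{M}'_L$. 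You instead argue directly: you extract $(x_n,v_n)\in\mathrm{supp}(\mu_n)\to(x,v)$, use $C^1$-convergence of the orbits, and transport each competitor of $\gamma|_{[a,b]}$ to a nearby competitor of $\gamma_n|_{[a,b]}$ by gluing short bridges. Both routes ultimately rest on the same continuity of the action in the endpoints (you handle the homotopy-class bookkeeping explicitly, which the paper leaves implicit under the phrase ``continuity of the minimal action''), so the approaches are genuinely equivalent, with yours being somewhat more self-contained.

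One small imprecision to flag: the displayed identity $A_L(l_n) = A_L(l) + A_L(\sigma_n^-) + A_L(\sigma_n^+)$ cannot literally hold once you reparametrize the concatenation back to the fixed interval $[a,b]$, since $L$ is not homogeneous of degree one in $v$ and reparametrization changes the action. What is true, and what your argument needs, is $A_L(l_n)\to A_L(l)$: traverse each bridge over a time window $\tau_n$ comparable to its length (so the bridge action is $O(\tau_n)$) and the middle piece over $[a+\tau_n, b-\tau_n']$, so its action converges to $A_L(l)$ as $\tau_n,\tau_n'\to 0$. This is a notational slip, not a gap; the rest of the passage to the limit is sound.
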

\begin{proof}
The convexity is straightforward from the definition since $\mbox{supp}(\frac{1}{2}(\mu_1+\mu_2))=\mbox{supp}(\mu_1)+\mbox{supp}(\mu_2)$ for any minimal measures $\mu_1,~\mu_2$ in the homotopical version. \\

Suppose $\mu_1,~\mu_2,~\cdots$ are a sequence of minimal measures in the homotopical version, and $\mu_n\rightarrow \mu$ under the weak-* topology as $n\rightarrow\infty$. It is sufficient to prove that $\mu$ is also a minimal measure in the homotopical version. \\

We prove this by contradiction. Assume $\mu$ is not a minimal measure in the homotopical version. Then there is a trajectory $(\gamma,\gamma')$ in the support of $\mu$ which is not action-minimizing in the homotopical version, i.e. $\gamma$ is not an action-minimizer in the homotopical version. This implies that $\exists~ a<b\in\R$ such that $\gamma|_{[a,b]}$ is not an action-minimizer. Denote $\gamma_0=\gamma|_{[a,b]}: [a,b]\ra M$. By Tonelli Theorem (cf.~\cite{Mat2}), there is a smooth curve $l_0:[a,b]\ra M$ homotopic to $\gamma_0$ relative to endpoints, which is an action-minimizer in the homotopic version. Let $\delta=A_L(\gamma_0)-A_L(l_0)$. Obviously $\delta>0$, since $\gamma_0$ is not an action-minimizer in the homotopical version.\\

Let $x=\gamma(a),~y=\gamma(b)\in M$ and $\textbf{p}=(x,\gamma'(a)), \textbf{q}=(y,\gamma'(b))\in TM$. Since both $\gamma_0$ and $l_0$ are solutions of the Euler-Lagrange equation and $\gamma_0(a)=\l_0(a),~\gamma_0(b)=\l_0(b)$, we know $$\gamma'_0(a)\neq\l'_0(a),~\gamma'_0(b)\neq\l'_0(b).$$  By the continuity of the Euler-Lagrange flow, there are small open neighborhoods $\textbf{U},~\textbf{V}$ of $\textbf{p},~\textbf{q}$ respectively, satisfying the following properties:
\begin{enumerate}
\item If a trajectory $(s,s')$ of the Euler-Lagrange flow satisfies $(s(a),s'(a))\in\textbf{U}$, then $(s(b),s'(b))\in \textbf{V}$.
\item There is an $\epsilon>0$ such that $B((l_0(a),l'_0(a)),\epsilon)\cap\textbf{U}=\emptyset,~B((l_0(b),l'_0(b)),\epsilon)\cap\textbf{V}=\emptyset$.
\end{enumerate}
Since the minimal action is continuous with respect to the end points, by shrinking $\textbf{U},~\textbf{V}$ if needed, we can guarantee that none of the trajectories passing through $\textbf{U}$ is action-minimizing in the homotopical version.\\

Since $\textbf{p}\in\mbox{supp}(\mu)$, we know $\mu(\textbf{U})>0$, no matter how small $\textbf{U}$ is. By $\mu_n\rightarrow \mu$, we know that there is a $N>0$  such that $\mu_n(\textbf{U})>\frac{\mu(\textbf{U})}{2}>0$ for all $n\geq N$. Fix a $n\geq N$, obviously $\mbox{supp}(\mu_n)\cap \textbf{U}\neq\emptyset$. So there is a trajectory $(s,s')$ in the support of $\mu_n$ which goes through $\textbf{U}$. By the discussion above, $(s,s')$ is not action-minimizing in the homotopical version. This contradicts to the fact the $\mu_n$ is a minimal measure in the homotopical version. We are done with this proof.
\end{proof}

Sometimes, for comparison, we also call the minimal measures in Mather theory the \emph{minimal measures in the homological version}. Obviously a minimal measures in the homological version is also a minimal measure in the homotopical version, i.e. $$\mathcal{M}_L\subset\mathcal{M}'_L.$$ This inclusion is proper in general.  For example, if $(\gamma, \gamma')$ is a closed action-minimizing trajectory in the homotopical version but not action-minimizing in the classic Mather theory, then the ergodic measure evenly distributed on $(\gamma, \gamma')$ is only a minimal measure in the homotopical version but not a Mather's minimal measure. It is easy to see that when $\pi_1(M,x)$ is not isomorphic $H_1(M,\Z)$, we do have a lot of ergodic minimal measures in the homotopical version which is not minimal in Mather theory.\\

Basic properties of action-minimizers and minimal measures in the homotopical version can be found in \cite{WX}. To further understand their characteristics, we presented an idea in \cite{WX}: lifting the Lagrangian system to a well-chosen finite-fold covering space, it is possible to "transform" a minimal ergodic measure in the homotopical version to a minimal measure in the homological version. Then the classical Mather theory can be employed in studying its properties. To realize this idea, we need to study the lifted Lagrangian systems on finite-fold covering spaces.
\vspace{.3cm}

\section{FINITE COVERING SPACES AND MINIMAL MEASURES}\label{chapter4}

In this section, we consider the minimal measures on finite-fold covering spaces. We assume that $M$ is a compact closed manifold whose fundamental group is non-commutative, and $\hat{M}$ is a $k$-fold covering space of $M$, ($k>1$), with the covering map $p: \hat{M} \ra M$. Obviously $\hat{M}$ is also a compact closed manifold. The covering map $p$ also induces a cover map $dp:T\hat{M} \ra TM$.\\

Suppose $L$ is a Lagrangian function satisfying the three conditions stated in Chapter \ref{chapter1}, which generates a positive definite Lagrangian system $\varphi_{t} : TM\hookleftarrow$. Consider the lifting $\hat{L}=L\circ dp : T\hat{M}\ra \R$. It is easy to check $\hat{L}$  also satisfies the three conditions of positive definiteness, super-linear growth, and the completeness of the Euler-Lagrange flow. So $\hat{L}$ also generates a positive definite Lagrangian system $\hat{\varphi}_{t} : T\hat{M}\hookleftarrow$. Notice that in fact the Lagrangian system $\hat{\varphi}_{t}$ on $T\hat{M}$ is exactly a lifting of the Lagrangian system $\varphi_t$ in the sense that, for each $x\in M$ and $v\in T_xM$, if $\hat{x}\in \hat{M}$ is a preimage of $x$ under $p$ and $\hat{v}\in T_{\hat{x}}\hat{M}$ is a preimage of $v$ under $dp$, then for all $t\in\R$, $$dp\circ\hat{\varphi}_{t}(\hat{v})=\varphi_t(v).$$

We expect to compare the set of minimal measures for the Lagrangian systems $\varphi_t$ and its lifting $\hat{\varphi}_t$ on the covering space. There is a standard projection $dp_*$ induced by the covering map $dp$, which sends probability Borel measures on $T\hat{M}$  to probability Borel measures on $TM$. Moreover it is easy to check this projection is surjective.  Now we use $\mathfrak{M}_{inv}(M)$ and  $\mathfrak{M}_{erg}(M)$ to denote the set of invariant probability measures and ergodic measures on $TM$ respectively, and use $\mathfrak{M}_{inv}(\hat{M})$ and  $\mathfrak{M}_{erg}(\hat{M})$ to denote the set of invariant probability measures and ergodic measures on $T\hat{M}$ respectively. We showed the following results in \cite{WX}:
\begin{enumerate}
\item For each probability measure $\hat{\mu}$ on $T\hat{M}$, suppose $\mu=dp_*(\hat{\mu})$, then $$A_{\hat{L}}(\hat{\mu})=A_L(\mu).$$
\item $dp_*$ sends  $\mathfrak{M}_{inv}(\hat{M})$  to  $\mathfrak{M}_{inv}(M)$ surjectively.
\item $dp_*$ sends  $\mathfrak{M}_{erg}(\hat{M})$  to  $\mathfrak{M}_{erg}(M)$ surjectively.
\item For each $\mu\in\mathfrak{M}_{erg}(M)$, its preimage set $dp_*^{-1}(\mu)$ has at most $k$ distinct ergodic measures.
\end{enumerate}

Considering the group homomorphism $p_*: H_1(\hat{M}, \R)\ra H_1(M,\R)$ induced by the covering map $p$, we know that $p_*$ is surjective. We can establish the relation between the rotation vectors of invariant measures on $T\hat{M}$ and the rotation vectors of their projections on $TM$. Suppose $\hat{\mu}\in \mathfrak{M}_{inv}(\hat{M})$  and $\mu=dp_*(\hat{\mu})$ is its projection on $TM$, then we can check that:$$p_*(\rho(\hat{\mu}))=\rho(\mu).$$ Moreover, for each $h\in H_1(M,\R)$, Let $\hat{V}_h=p_*^{-1}(h)\subset H_1(\hat{M}, \R)$ be the set of preimages of $h$ (it is an affine subspace in $H_1(\hat{M}, \R)$), then from the above equality and the definition of minimal measures, we can see that:
\begin{equation*}
\beta_L(h)=\min\{\beta_{\hat{L}}(\hat{h})~|~\hat{h}\in \hat{V}_h\}.
\end{equation*}
This means that if $\mu$ is a minimal measure of $\varphi^t$ on $TM$, then any of its preimage is a minimal measure of on $\hat{\varphi}_{t}$ on $T\hat{M}$, i.e.~ $dp_*^{-1}$ \emph{lifts minimal measures to minimal measures}.\\

Notice that, since $\hat{V}_h$ is an affine subspace in $H_1(\hat{M}, \R)$, the set $\{\beta_{\hat{L}}(\hat{h})~|~\hat{h}\in \hat{V}_h\}$ is unbounded from above. So, there are infinitely many preimages $\hat{h}$ of $h$,
whose minimal measures on $T\hat{M}$ do not project to minimal measures on $TM$. However, they project to minimal measures in the homotopical version on $TM$ (cf. \cite{WX}). Let $\mathcal{M}^{*}_{L}$ be the set of all invariant measures which are projections of Mather's minimal measures for lifted Lagrangian systems on finite-fold covering spaces. Based on the discussion above, we can draw the conclusion that $$\mathcal{M}_L\subset \mathcal{M}^{*}_L\subset\mathcal{M}'_L.$$

It has been showed, for geodesic flows on manifolds with non-commutative fundamental groups, $\mathcal{M}_L$ is a proper subset of $\mathcal{M}^*_L$ (cf.~\cite{WX}). We think that in general the set $\mathcal{M}^*_L$ should be very close to $\mathcal{M}'_L$ in the sense that all ergodic elements of $\mathcal{M}'_L$  are contained in $\mathcal{M}^*_L$ or in $\overline{\mathcal{M}^*_L}$, where $\overline{\mathcal{M}^*_L}$ denotes the closure of $\mathcal{M}^*_L$ in the space of probability measures on $TM$. In this paper we have prove this for a large part of minimal ergodic measures in homotopical version on are projections of Mather's minimal measures on finite-fold covering spaces, for geodesic flows on surfaces of higher genus.\\

\section{MINIMAL MEASURE FOR GEODESIC FLOWS}\label{chapter5}
We will give of proof of Theorem \ref{main theorem} in the next section. As a preparation, we should go over some preliminary results for geodesic flows on compact surfaces, which will be extensively used in the discussion of the next chapter. We will not give the proofs for the results in this section. For more details, please refer to \cite{W}.\\

Suppose $M$ is a smooth closed compact connected
orientable surface with genus $g>1$. Obviously for any $x\in M$, the fundamental group $\pi_1(M,x)$ is non-commutative, and has an exponential (algebraic) growth rate. Let $G$ be a complete Riemannian metric on $M$. Consider the
Lagrangian $$L(x,v)=G_{x}(v,v).$$  By the variational principle, it is easy to check that in the local
coordinates, the geodesics are precisely the maximal solutions of
the Euler-Lagrange equation for the
Lagrangian function $G$ (cf. ~\cite{Pa}). Therefore the positive definite Lagrangian system generated by $G$ is exactly the geodesic flow on $TM$.
We remark that in this paper, without specifical
indication, we suppress the notional distinct between a geodesic $l$
on $M$ and the trajectory $(l, l')$ of the geodesic flow on $TM$, and call both of them
geodesics.\\

Given an absolutely continuous curve $l: [a,b]\rightarrow M$. Let $|l|$ denote the arclength of $l$, i.e. $$|l| = \int_{a}^{b} \| l'(t)\| dt=\int_{a}^{b} \sqrt{G_{l(t)}(l'(t),l'(t))} dt.$$
The action $A_L(l)$ of $l$ is defined to be:
$$A_L(l)=\int_{a}^{b}G_{l(t)}(l'(t),l'(t)) dt = \int_{a}^{b} \| l'(t)\|^2 dt.$$
If in addition $l$ is a closed curve and $h=[l]\in H_1(M,\Z)$, in \cite{Mat2} Mather defined its rotation vector $\rho(l)$ as $$\rho(l)= \frac{h}{T} \in H_{1} (M, \R),$$
where $T=b-a>0$. Here we identify $H_{1}(M, \Z)$ with the lattice of integral
vectors in $H_{1}(M, \R)$.\\

Suppose $\gamma: \mathbb{R} \rightarrow M$ is a periodic geodesic with period $T>0$, and $l=\gamma|_{[0,T]}:[0,T]\rightarrow M$. Let $\mu_{l}$ denote the
unique invariant probability measure evenly distributed on $\{
(l(t), l'(t))\}\mid_{[0,T]}$, then it is easy to check that
$$\rho(\mu_{l})=\rho(l)=\frac{h}{T},~~~~ \  \&~~ \ A_L(\mu_{l})= \frac{1}{T} \int_{0}^{T} \| l'(t)\|^2 dt =
\frac{1}{T} A_L(l).$$

Let $\tilde{l}: [0, \frac{T}{a}] \rightarrow M$ be a re-parameterization
of $l$ with $\|\tilde{l}'(t)\| =a \| l'(t)\|$, It is straightforward that $$\rho(\tilde{l})=\frac{a}{T}h= a
\rho(l),~~~\ \&~~~\ A_L(\tilde{l})=\int_{0}^{\frac{T}{a}}(a\| l'(t)\|)^{2} dt = a A_L(l).$$
Then the rotation vector and the action of $\mu_{\tilde{l}}$ are
$$\rho(\mu_{\tilde{l}})=\rho(\tilde{l})= a
\rho(l),~~~\ \&~~~\ A_L(\mu_{\tilde{l}})=\frac{a}{T}A_L(\tilde{l})=\frac{a^2}{T}A_L(l)=a^{2}A_L(\mu_{l}).$$

Similar results are also valid for general invariant probability measures. Suppose $\mu$ is an invariant probability measure of the geodesic flow and $a>0$ is a constant. Let $\mu'$ be the
probability measure satisfying the following property: for every measurable set $E'$
and $E=\{(x,v)\in TM \mid (x, av) \in
E'\}$, we have $\mu'(E')=\mu(E).$ $\mu'$ is called {\em a shifting of $\mu$}. One can check that
$$\rho(\mu')=a\rho(\mu)~~\ \&~~\ A_L(\mu')=a^{2}A_L(\mu).$$  This is called {\em the shifting property} of invariant probability measures for geodesic flows. The lemma in the following is a consequence of the shifting property (cf.~\cite{W}).

\begin{lem}\label{shifting property}
If $\mu$ is a minimal measure for the geodesic flow, and $\mu'$
is a shifting of $\mu$, then $\mu'$ is also a minimal measure.
\end{lem}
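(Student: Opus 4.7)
\medskip

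\noindent\textbf{Proof proposal.} The plan is to argue by contradiction, exploiting that the shifting operation $\mu \mapsto \mu'$ is invertible: shifting by the factor $a$ followed by shifting by the factor $1/a$ returns the original measure. So if $\mu'$ failed to be minimal, we would find a competitor of lower action whose back-shift by $1/a$ would beat $\mu$, contradicting minimality of $\mu$.

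More concretely, suppose for contradiction that $\mu'$ is not minimal. Since $\rho(\mu') = a\rho(\mu)$ and $A_L(\mu') = a^2 A_L(\mu)$, non-minimality means there exists an invariant probability measure $\nu'$ with
\[
\rho(\nu') = \rho(\mu') = a\rho(\mu), \qquad A_L(\nu') < A_L(\mu') = a^2 A_L(\mu).
\]
Now form the shift $\nu$ of $\nu'$ by the factor $1/a$ (which is well defined for every invariant probability measure of the geodesic flow, since shifting is just the rescaling of velocities fiberwise). By the shifting property applied with factor $1/a$,
\[
\rho(\nu) = \tfrac{1}{a}\rho(\nu') = \rho(\mu), \qquad A_L(\nu) = \tfrac{1}{a^2} A_L(\nu') < A_L(\mu).
\]
Thus $\nu$ is an invariant probability measure in the same rotation class as $\mu$ but of strictly smaller action, contradicting the minimality of $\mu$. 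Hence $\mu'$ is minimal.

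The only step requiring care is making sure the shifting operation is genuinely an involution under composition of factors $a$ and $1/a$, and that it does produce an invariant probability measure to which the identities $\rho(\nu) = (1/a)\rho(\nu')$ and $A_L(\nu) = (1/a^2)A_L(\nu')$ apply at the level of arbitrary (not necessarily atomic-on-a-closed-geodesic) invariant measures. Both facts are recorded in the statement of the shifting property above, so no genuine obstacle arises; the lemma reduces to a two-line application of the contrapositive. An equivalent and slightly more conceptual way to phrase the same argument is to verify directly that the Mather beta function of the geodesic Lagrangian is homogeneous of degree two, i.e.\ $\beta(ah) = a^2 \beta(h)$ for all $a > 0$ and $h \in H_1(M,\R)$, by taking the infimum over rotation class $ah$ and reparameterizing: this is exactly what the shifting property asserts on the level of the invariant measures that realize the minimum.
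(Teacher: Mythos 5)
Your proof is correct, and it fills in exactly the argument the paper leaves implicit when it says the lemma is a "consequence of the shifting property" and defers to \cite{W}: shift a hypothetical better competitor back by $1/a$ and use the multiplicative identities $\rho \mapsto a\rho$, $A_L\mapsto a^2 A_L$ to contradict the minimality of $\mu$. No issues.
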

To establish our theory of the minimal measures distributed on closed trajectories for the geodesic flows, we define the terminology of the disjoint partition in \cite{W}.
\begin{definition}[Disjoint Partition]\label{disjiont partition}
Given a first homology class $h\in H_{1}(M,\Z)$, we say a finite set of piecewise smooth simple closed
curves $\{l_{1}, \cdots, l_{n}\}$, $n\geq 1$, where $l_i : [0,T_i]\rightarrow M$ for every $i=1,
\cdots,n$, is a disjoint partition of $h$, if
$[l_{1}]+ \cdots+ [l_{n}]=h$, and for each pair $1\leq i<j\leq n$, either $l_i=l_j$ possibly after an orientation preserving
re-parameterization or $l_{i}\cap
l_{j} =\emptyset$.
\end{definition}

If in a disjoint partition $\{l_{1}, \cdots,
l_{n}\}$ we find $l_i=l_j$ for some $1\leq i< j\leq n$ (up to an orientation preserving
re-parameterization), we can
look $l_i$ and $l_j$ as two copies of a unique closed curve. Then
sometimes we also write the disjoint partitions in the form $\{k_1 l_{1}, \cdots,
k_m l_{m}\}$ ,where $k_i \in \Z$ is the multiplicity $l_i$ in the
disjoint partition.\\

Given a disjoint partition $\mathcal{A}= \{l_{1}, \cdots, l_{n}\}$ (of some first homology class $h$), where $l_i:[0,T_i]
\rightarrow M,\ i=1,\cdots,n$, we define its total arclength $|\mathcal{A}|$ to be the summation of the arclength of its elements,  i.e
$|\mathcal{A}|=|l_1|+\cdots+|l_n|.$ Moreover we use $[\mathcal{A}]$
to denote
$[l_1]+\cdots+[l_n]=h\in H_1(M,\Z)$. We say the disjoint partition
$\mathcal{A}$ {\em supports} a probability measure $\mu$ (defined on $TM$), if
$$\pi(\mbox{supp}(\mu))=l_1\cup\cdots\cup l_n,$$ where
$\pi: (x,v)\mapsto x$ is the standard projection from $TM$ to $M$.\\

For each $h\in H_1(M,\mathbb{Z})$ there are infinitely many
disjoint partitions of $h$. Therefore the total arclength of these disjoint
partitions of $h$ varies. However they have an obvious lower bound which is
$0$. The following lemma tells that for each $h\in
H_1(M,\mathbb{Z})$, $$\min_{[\mathcal{A}]=h}
\{|\mathcal{A}|\}=\inf_{[\mathcal{A}]=h} \{|\mathcal{A}|\}.$$

\begin{lem}[cf.~\cite{W}]\label{existence of minimal disjoint partition}
For every $h\in H_{1}(M,\Z)$, there is a disjoint partition of $h$, which has minimal total
arclength among all disjoint partitions of $h$.
\end{lem}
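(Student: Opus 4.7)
The plan is to extract a minimizer from a minimizing sequence by combining a compactness reduction with oriented cut-and-paste surgery. First I pick disjoint partitions $\mathcal{A}_k=\{l_1^{(k)},\ldots,l_{n_k}^{(k)}\}$ of $h$ with $|\mathcal{A}_k|\searrow \inf_{[\mathcal{A}]=h}|\mathcal{A}|$. I discard any contractible components of $\mathcal{A}_k$, which contribute $0$ to $h$ but add positive length. The systole $s=s(M,G)>0$ (positive since $(M,G)$ is closed and smooth) then bounds $n_k\leq |\mathcal{A}_k|/s$, so after extracting a subsequence $n_k=n$ is constant.

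Next I fix the free homotopy classes of the components. Because the deck action of $\pi_1(M,x_0)$ on the universal cover is properly discontinuous, for any $L<\infty$ only finitely many free homotopy classes contain loops of length $\leq L$ in $(M,G)$. Taking $L=|\mathcal{A}_1|$ and passing to a further subsequence puts each $l_i^{(k)}$ in a fixed free homotopy class $\al_i$. Standard Riemannian calculus of variations (Arzel\`a--Ascoli on loops of bounded length in $\al_i$) produces a length-minimizing closed geodesic $\gamma_i\in\al_i$ with $|\gamma_i|\leq\liminf_k |l_i^{(k)}|$. Since free homotopy preserves integral homology, $\sum_i [\gamma_i]=h$ and $\sum_i |\gamma_i|\leq \inf_{[\mathcal{A}]=h}|\mathcal{A}|$; moreover, self- and mutual intersections of the smooth geodesics $\gamma_i$ are automatically transverse by ODE uniqueness for geodesics.

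Finally I convert $\{\gamma_1,\ldots,\gamma_n\}$ into a disjoint partition by oriented cut-and-paste surgery. At each transverse crossing $p$ the four local arcs are recombined to respect orientation, producing a new piecewise smooth collection of closed curves that carries the same integral $1$-cycle (hence the same homology class $h$) and has strictly fewer intersections than the old one. Iterating terminates in finitely many steps since the intersection count strictly decreases; along the way I discard any contractible loop that appears, which only lowers length. After a final smoothing of the surgery corners (needed to separate the new curves from the former crossing points) and a routine compactness limit over the vanishing smoothing scale, I obtain a disjoint partition $\mathcal{A}^{*}$ of $h$ with $|\mathcal{A}^{*}|\leq \sum_i |\gamma_i|\leq \inf_{[\mathcal{A}]=h}|\mathcal{A}|$, which therefore attains the infimum.

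The technical heart is the surgery step. Topological cut-and-paste preserves arclength and the $1$-cycle exactly, but the resulting curves share the former crossing point and must be separated by a small smoothing whose length cost is absorbed into a concluding compactness limit over the smoothing scale. Tracking the homology class through the iteration and checking that each surgery strictly reduces the intersection count (so termination is automatic) are routine but require bookkeeping.
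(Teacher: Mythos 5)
Your outline---systole bound on the number of components, finiteness of free homotopy classes of bounded length, shortest geodesic representatives in each class, and crossing surgery---is a natural route to this lemma; since the paper itself defers the proof to \cite{W}, I cannot match it step by step against the cited argument. However, the proposal as written does not close, and the failure is in exactly the step you flag as the technical heart.

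Plain oriented cut-and-paste at a transverse crossing preserves total arclength and the $1$-cycle, and you are right that the two resulting arcs still share the old crossing point and must be pushed apart. But the separation you propose, a smoothing at scale $\epsilon$ followed by a ``compactness limit over the vanishing smoothing scale,'' fails: as $\epsilon \to 0$ the smoothed curves converge back to the unseparated, touching configuration, which is not a disjoint partition, so the limit yields nothing. The remedy is that no limit is needed and there is no ``length cost'' to absorb. Because the original crossing is transverse, each recombined arc has a genuine corner at the old crossing point (the incoming and outgoing tangents make an angle strictly less than $\pi$); replacing the two rays of each corner by a short geodesic chord inside a small convex normal ball \emph{strictly decreases} length, and the two chords can be taken in opposite quadrants of the crossing so that the separated arcs are disjoint. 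Doing this once at every crossing, with pairwise disjoint balls chosen small enough that no new intersections are introduced, produces in one step a disjoint partition $\mathcal{A}^*$ of $h$ with $|\mathcal{A}^*| \leq \sum_i |\gamma_i| \leq \inf_{[\mathcal{A}]=h}|\mathcal{A}|$; the reverse inequality holds by definition, so $\mathcal{A}^*$ realizes the infimum. Two smaller omissions worth filling in: before invoking transversality via ODE uniqueness you should dispose of the degenerate cases where two of the shortest geodesics $\gamma_i,\gamma_j$ coincide (they then count as admissible identical components in the sense of Definition \ref{disjiont partition}) or are time reversals of one another (their classes cancel in $H_1(M,\Z)$ and both may be dropped, decreasing length); and you should note that each $\alpha_i$ is primitive---because a noncontractible simple closed curve on an orientable surface represents a primitive free homotopy class---so none of the $\gamma_i$ is a nontrivial multiple cover, which would spoil transversality of self-intersections.
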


From the lemma, we can see that the minimal total arclength among
disjoint partitions of $h$ can be realized by a disjoint partition.
We call disjoint partitions with this property {\em the minimal
disjoint partitions}. Obviously, minimal disjoint partitions are consisting of simple closed geodesics. Now let us consider a minimal measure $\mu$ of the geodesic flow which is distributed on a finite set of closed geodesics $\{(\gamma_1,\gamma'),\cdots,(\gamma_n,\gamma'_n)\}$, where $\gamma_i:\R\ra M$ is a periodic geodesic with least period $T_i>0$, $i=1,\cdots, n$. Let $l_i=\gamma_i|_{[0,T_i]}$, $i=1,\cdots,n$, then $\mathcal{A}=\{l_1,\cdots,l_n\}$ is a disjoint partition of $h=[l_i]+\cdots+[l_n]\in H_1(M,\Z)$. The following lemma tells implies that
$\mathcal{A}$ should be a minimal disjoint partition. And moreover a minimal disjoint partition always supports a minimal measure.

\begin{lem}[cf.~\cite{W}]\label{measure-partitions}
\begin{enumerate}
\item For each $h\in H_1(M,\Z)$, each disjoint partition
$\mathcal{A}$ of $h$ consisting of geodesics, and each $T>0$,
there is an invariant probability measure $\mu$ supported on
$\mathcal{A}$ with rotation vector $\frac{h}{T} \in H_1(M,\R)$,
whose action is minimal among the actions of all the invariant
probability measures supported on $\mathcal{A}$ with rotation vectors
$\frac{h}{T}$.
\item If $\mathcal{A}$ and $\mathcal{B}$ are both
disjoint partitions of $h\in H_1(M, \Z)$ consisting of geodesics,
and $\mu_A$ and $\mu_B$ are invariant probability measures with rotation vectors $\frac{h}{T}$ supported
$\mathcal{A}$ and $\mathcal{B}$ respectively, which satisfy the
minimal action property described in (1). Then
\begin{equation}\label{comparison}
A_L(\mu_A)\leq A_L(\mu_B)\Longleftrightarrow |\mathcal{A}| \leq
|\mathcal{B}|.
\end{equation}
\item For each $h\in H_{1}(M,\Z)$, and $T>0$, if $\mathcal{A}$ is a minimal disjoint partition of $h$, then the invariant measure $\mu_A$ in the above is a minimal measure with rotation vector $\frac{h}{T}\in H_1(M,\R)$.
\end{enumerate}
\end{lem}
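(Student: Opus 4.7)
The plan is to first prove Part 1 by reducing to a finite-dimensional optimization via ergodic decomposition and the shifting property, then deduce Part 2 from the resulting explicit formula, and finally combine Part 2 with classical Mather theory to obtain Part 3.

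For Part 1, any invariant probability measure $\mu$ supported on $\mathcal{A}$ is, by the ergodic decomposition, a mixture of ergodic measures each of which is uniformly supported on the reparameterization of some $l_i$ at some speed $a>0$ (using the shifting property of Lemma~\ref{shifting property}). A first Cauchy--Schwarz step applied to each $l_i$ shows that the action-minimizing mixture concentrates each ergodic component at a single speed $a_i$ with weight $p_i\geq 0$, reducing the rotation and action to
$$\rho(\mu) = \sum_{i=1}^n \frac{p_i a_i}{|l_i|}\,[l_i], \qquad A_L(\mu) = \sum_{i=1}^n p_i a_i^2.$$
Under the constraints $\sum p_i = 1$ and $\rho(\mu) = h/T$, a further Cauchy--Schwarz step (or equivalently Lagrange multipliers) shows that the minimum is attained when all $a_i$ coincide and $p_i$ is proportional to $|l_i|$. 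The natural choice $a_i = |\mathcal{A}|/T$, $p_i = |l_i|/|\mathcal{A}|$ satisfies the rotation constraint and yields the explicit formula
$$A_L(\mu_A) = \frac{|\mathcal{A}|^2}{T^2}.$$

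Part~2 then follows immediately, since $T$ is common to $\mathcal{A}$ and $\mathcal{B}$. For Part~3, the plan is to invoke property~(3) of Mather theory from Chapter~\ref{chapter2} to express any minimal measure with rotation vector $h/T$ as a weak-$*$ limit of probability measures evenly distributed on segments of action-minimizing trajectories. Using the Lipschitz graph property together with the two-dimensional topology of $M$, one argues that, because $h/T$ is rational, the support of such a minimal measure projects under $\pi$ to a finite union of simple closed geodesics forming a disjoint partition $\mathcal{B}$ of $h$. Applying Part~2 to this $\mathcal{B}$ yields the action bound $|\mathcal{B}|^2/T^2 \geq |\mathcal{A}|^2/T^2 = A_L(\mu_A)$ whenever $\mathcal{A}$ is a minimal disjoint partition, so that $\mu_A$ itself attains the minimum action and is therefore a minimal measure.

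The main obstacle is the structural claim used in Part~3 --- namely, that the support of a Mather minimal measure with rational rotation vector on a higher-genus surface projects to (the lift of) a disjoint partition of $h$. Establishing this rigorously combines the Lipschitz graph property (which precludes projected self-intersections), the specific topology of the genus-$g$ surface, and a closing-type argument at rational rotation vectors. This is the technical heart of the argument, and its detailed treatment appears in \cite{W}.
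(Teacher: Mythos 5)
The paper itself does not prove Lemma~\ref{measure-partitions}; it states that ``the proof is very long and was presented in \cite{W},'' so there is no in-paper argument to compare against, and your proposal must stand on its own. Your reduction for Part~1---ergodic decomposition, then a two-step Cauchy--Schwarz to force a single speed per curve and the weights $p_i\propto |l_i|$---is the natural route, but it silently assumes that the classes $[l_1],\dots,[l_n]$ are linearly independent in $H_1(M,\R)$. Only under that assumption does the constraint $\rho(\mu)=h/T=\tfrac{1}{T}\sum[l_i]$ pin down $p_i a_i=|l_i|/T$, which is what the second Cauchy--Schwarz needs to give $A_L(\mu_A)=|\mathcal{A}|^2/T^2$. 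The definition of a disjoint partition in Chapter~\ref{chapter5} does \emph{not} forbid dependences: one can easily arrange two disjoint simple closed geodesics $c_1,c_2$ on a higher-genus surface with $[c_1]=[c_2]\ne 0$ and $|c_1|<|c_2|$ (cut the surface along two circles into two pieces, each a once-punctured torus doubled; Fenchel--Nielsen gives any lengths). For $\mathcal{A}=\{c_1,c_2\}$, putting almost all the mass on $c_1$ drives the action towards $4|c_1|^2/T^2<(|c_1|+|c_2|)^2/T^2$, so your closed-form expression fails and, as stated, the infimum is not even attained among measures whose support covers all of $\mathcal{A}$. So either the lemma in \cite{W} carries an additional nondegeneracy hypothesis, or it interprets ``supported on $\mathcal{A}$'' more rigidly than the projection condition suggests; either way your derivation of the formula needs to address this case explicitly, not just write the formula down. (For \emph{minimal} disjoint partitions the issue disappears, since homologous components would have to have equal length, but your Parts~1--2 are claimed for arbitrary partitions.)

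For Part~3, your plan is to show every Mather minimal measure with rational rotation vector projects to a disjoint partition and then apply Part~2. That structural claim is essentially the entire content of the lemma, and you correctly identify it as the technical heart and defer to \cite{W}. Two further cautions: the Lipschitz graph property gives non-crossing of the projected support, but by itself does not force the support to consist of periodic orbits (on surfaces of higher genus, non-crossing minimal sets can be genuine geodesic laminations rather than multicurves, even at special rotation vectors), so the closing-type argument you gesture at is where the real work is; and strictly speaking one only needs to show $\beta(h/T)\geq |\mathcal{A}|^2/T^2$, which might be more accessible than the full structural statement via the approximation property~(3) of minimal measures in Chapter~\ref{chapter2} together with curve-shortening on the surface. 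In short: the skeleton is reasonable, but the degenerate-homology case in Part~1 is a concrete gap in your derivation, and Part~3 is a citation rather than a proof.
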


The proof is very long and was presented in \cite{W}. We remark that the third statement in Lemma \ref{measure-partitions} is a key tool in the next section. It gives a (partial) criteria to distinguish minimal measures distributed on closed trajectories. For example, given a closed geodesic $\gamma$ with least period $T$, and let $\mu$ be the ergodic measure evenly distributed on $(\gamma,\gamma')|_{[0,T]}$.  We expect to know whether $\mu$ is a minimal measure or not. Let $l=\gamma|_{[0,T]}$, then $\mathcal{A}=\{l\}$ is a disjoint partition of $h=[l]\in H_1(M,\Z)$ (consisting of only one closed curve). Obviously $$\rho(\mu)=\rho(l)=\frac{h}{T}\in H_1(M,\R),$$ here we identify $H_{1}(M, \Z)$ with the lattice of integral vectors in $H_{1}(M, \R)$. It can be shown by the shifting property and Jesen's inequality that $\mu$ has minimal action among all the invariant probability measures supported on $\mathcal{A}$ with rotation vectors $\frac{h}{T}$. Then Lemma \ref{measure-partitions} tells us that $\mu$ is a minimal measure if $\mathcal{A}$ is a minimal disjoint partition of $h$.
\vspace{.3cm}

\section{PROOF OF THE MAIN THEOREM}\label{chapter6}
In this section, we will give the proof of Theorem \ref{main theorem}. Suppose $M$ is a compact closed orientable surface with genus $g>1$, and $G$ is a complete Riemannian metric on M.  Consider the Lagrangian system generated by $L(x,v)=G_x(v,v)$, which is exactly the geodesic flow. First of all, we should notice that if $\mu$ is evenly distributed on a closed trajectory $(\gamma, \gamma')$, and is minimal in the homotopic version, then $\gamma$ is an action-minimizer in the homotopical version, and vice versa. Suppose the least positive period of $\gamma$ is $T>0$, then $\gamma|_{[0,T]}$ is a shortest representative in its free homotopy class. It is a standard result in geometry that on compact closed Riemannian manifolds, each free homotopy class of closed curves has a shortest representative, which is a smooth closed geodesic. So this kind of minimal ergodic measures in the homotopical version are abundant.\\

Let $l=\gamma|_{[0,T]}: [0,T]\ra M$. In the following we can assume $l$ is a simple closed curve, for otherwise, if $l$ has self-intersections, we can find a finite-fold covering space on which any lifting of $l$ is simple closed. Let $h=[l]\in H_1(M,\Z)$. If $h\neq e$  and $l$ has the shortest arclength among all disjoint partitions of $h$, then by Lemma \ref{measure-partitions} (3), $\mu$ is already a minimal measure in Mather theory. So we just have to take the covering map to be the trivial covering $p=\mbox{id}:M\ra M$.\\

We remark that Lemma \ref{measure-partitions} (3) is the key observation, $\mu$ is a minimal measure or not completely determined by whether or not $l$ is a minimal disjoint partition in its homology class. So if $l$ is not a minimal disjoint partition, we will show that \emph{there is a finite-fold covering space $M'$ with the covering map $p: M'\ra M$, such that each preimage of $l$ on $M'$ is a minimal disjoint partition in its homology class in $H_1(M',\Z)$}. We will show this based on the following three lemmas.

\begin{lem}\label{nontrivial geodesic}
If $[l]=e\in H_1(M,\Z)$, and $l$ is homotopically nontrivial, then there is a finite-fold covering space $M_1$ with the covering map $p_1: M_1\ra M$ such that
\begin{enumerate}
\item Each preimage of $l$ is still a simple closed curve.
\item Each preimage of $l$ is homologically nontrivial, i.e.~if $l_1$ is a preimage of $l$, then $[l_1]\neq e \in H_1(M_1,\Z)$.
\end{enumerate}
\end{lem}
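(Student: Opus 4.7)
The plan is to use that a simple closed curve $l$ on an orientable surface with $[l]=e\in H_1(M,\Z)$ is necessarily \emph{separating}, and then to build a specific double cover coming from a $\Z/2$-valued homomorphism on $\pi_1(M)$ that is non-trivial on both sides of $l$. In such a cover each preimage component of $l$ will be non-separating, which on a closed orientable surface is equivalent to being homologically non-trivial.

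Concretely, I would first write $M=\Sigma_1\cup_l\Sigma_2$, where $\Sigma_i$ is a compact subsurface of genus $g_i\geq 1$ with $\partial\Sigma_i=l$; the lower bound $g_i\geq 1$ uses that $l$ is not null-homotopic, hence does not bound a disk. Choose standard generators so that $\pi_1(\Sigma_1)$ is generated by $a_1,b_1,\ldots,a_{g_1},b_{g_1}$ and $\pi_1(\Sigma_2)$ by $a_{g_1+1},b_{g_1+1},\ldots,a_g,b_g$, with the single relation $\prod_{i=1}^g[a_i,b_i]=1$ in $\pi_1(M)$. Define $\phi:\pi_1(M)\ra\Z/2$ by $\phi(a_1)=\phi(a_g)=1$ and sending every other generator to $0$. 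The relation is automatically satisfied in the abelian target, so $\phi$ is well-defined, and by construction both restrictions $\phi|_{\pi_1(\Sigma_1)}$ and $\phi|_{\pi_1(\Sigma_2)}$ are surjective. Let $p_1:M_1\ra M$ be the double cover corresponding to $H=\ker\phi$.

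Since $\phi([l])=0$, every lift of $l$ to $M_1$ closes up after a single traversal, so $p_1^{-1}(l)$ has exactly two components $\tilde l_1,\tilde l_2$, each mapping bijectively to $l$ and hence a simple closed curve in $M_1$; this gives part (1). Because $\phi|_{\pi_1(\Sigma_i)}$ is surjective, the preimage $\tilde\Sigma_i\subset M_1$ of $\Sigma_i$ is a connected double cover of $\Sigma_i$ with two boundary components, and $M_1=\tilde\Sigma_1\cup\tilde\Sigma_2$ glued along $\tilde l_1\sqcup\tilde l_2$. To verify (2), I would delete $\tilde l_1$ from $M_1$: each $\tilde\Sigma_i\setminus\tilde l_1$ is a connected surface (removing one boundary circle from a connected surface with two boundary circles preserves connectedness), and the two pieces remain glued along $\tilde l_2$, so $M_1\setminus\tilde l_1$ is connected. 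Thus $\tilde l_1$ is non-separating, and on a closed orientable surface this forces $[\tilde l_1]\neq e\in H_1(M_1,\Z)$; the symmetric argument handles $\tilde l_2$.

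The construction is essentially forced once the separating decomposition $M=\Sigma_1\cup_l\Sigma_2$ is in place; the only creative point is the choice of $\phi$ that is simultaneously non-trivial on $\pi_1(\Sigma_1)$ and $\pi_1(\Sigma_2)$, which is precisely what the hypothesis $g_1,g_2\geq 1$ (i.e., $l$ homotopically nontrivial) permits. I therefore do not foresee a serious technical obstacle; the main point requiring care is the routine verification that the lifted subsurfaces and the two lifts of $l$ fit together so as to make $M_1\setminus\tilde l_j$ connected.
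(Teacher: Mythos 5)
Your proof is correct and takes essentially the same route as the paper: both construct the double cover from a $\Z/2$-valued homomorphism on $\pi_1(M)$ that vanishes on $[l]$ but is nontrivial on one generator from each side of the separating curve, and then conclude that the lifts of $l$ are non-separating. The only cosmetic difference is in the final verification---the paper exhibits a transversal loop through a lift of the basepoint crossing $\tilde l_1$ exactly once, whereas you reason directly that each lifted subsurface $\tilde\Sigma_i$ stays connected after removing $\tilde l_1$ and the two pieces are still joined along $\tilde l_2$.
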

\begin{proof}
By the assumptions, $M$ is a compact closed surface with genus $g\geq 2$ and $l\subset M$ is trivial in homology but nontrivial in homotopy. Therefore $M-l$ is disconnected. And moreover we have two compact surfaces $S_1$ and $S_2$, each of which has a boundary homeomorphic to $\mathbb{S}^1$ and has positive genus, such that $M=S_1\cup S_2$ and $l=S_1\cap S_2$. Suppose $S_1$ has genus $m$ with $1\leq m<g$, and $S_2$ has genus $1\leq g-m<g$.\\

Take a point $x\in l$, then the fundamental group based on $x$ can be expressed as $$\pi_1(M,x)=\langle a_1,a_2,\cdots,a_{2g-1},a_{2g}~|~[a_1,a_2]\cdots[a_{2g-1},a_{2g}]=e\rangle,$$
where the first $2m$ generators can be represented by loops in $S_1$ and the rest $2g-2m$ of the generators can be represented by loops in $S_2$. Therefore $$[l]=[a_1,a_2]\cdots[a_{2m-1},a_{2m}]\in \pi_1(M,x).$$ \vspace{.1ex}

Fix the set of generators $\{a_1,a_2,\cdots,a_{2g-1},a_{2g}\}$ of $\pi_1(M,x)$. For each homotopy class $a=a_{j_1}^{\varepsilon_{j_1}}\cdots a_{j_N}^{\varepsilon_{j_N}}\in \pi_1(M,x)$, $\varepsilon_{j_1}, \cdots, \varepsilon_{j_N} \in \Z$, we define $$\chi_t(a)=\sum_{j_i=t} \varepsilon_{j_i},~~t=1,\cdots, 2g.$$ One can check that the values of $\chi_t(a)$'s do not depend on the choice of expressions of $a$,  i.e.~although the expression of $a$ under this set of generators may not be unique, $\chi_t(a)$ is uniquely determined for every $t=1,\cdots, 2g$.  Let $$G=\{a\in \pi_1(M,x)~|~\chi_{2m-1}(a)+\chi_{2m+1}(a)=0\mod 2\}.$$ Then we can see that:
\begin{enumerate}
\item $G$ is a normal subgroup of $\pi_1(M,x)$. This is because for any $a\in G$ and $b\in  \pi_1(M,x)$, $\chi_t(bab^{-1})=\chi_t(a)$, $t=1,\cdots,2g$, therefore $$a\in G \Longrightarrow bab^{-1}\in G,~\forall b\in \pi_1(M,x).$$
\item $[\pi_1(M,x):G]=2$.
\item $[l]\in G$. Since $[l]=[a_1,a_2]\cdots[a_{2m-1},a_{2m}]$,  $\chi_{2m-1}([l])=\chi_{2m+1}([l])=0$.
\item $a_{2m-1}, a_{2m+1}\notin G$.
\end{enumerate}\vspace{1ex}

By the standard theory of the existence of covering spaces (cf.~\cite{Ro}), there is a 2-fold covering space $M_1$ with the covering map $p_1: M_1\ra M$ such that $$p_{1*}(\pi_1(M_1,x_1))=G,$$ where $x_1\in M_1$ is an arbitrary preimage of $x$ under $p_1$.\\

Let $l_1\subset M_1$ be the preimage of $l$ under $p_1$ passing through $x_1$. We know $l_1$ is still a closed curve since $[l]\in G=p_{1*}(\pi_1(M_1,x_1))$. By consider the Deck transformations, we know that the other preimage $l_2$ of $l$ is also a closed curve. To show that $l_1$ is nontrivial in homology, we just have to show that $M_1-l_1$ is connected.\\

Let $l_{2m-1},~l_{2m+1}: [0,1]\ra M$ be representatives of $a_{2m-1}, a_{2m+1}$ respectively, such that $l_{2m-1}((0,1))\subset \mbox{int}(S_1)$, $l_{2m+1}((0,1))\subset \mbox{int}(S_2)$. Consider $L=l_{2m-1}*l_{2m+1}:[0,1]\ra M$, then $L\cap l=\{x\}=\{L(0)\}=\{L(1)\}=\{L(\frac{1}{2})\}$. Moreover we can require $L$ intersects $l$ transversally by considering well-chosen $l_{2m-1},~l_{2m+1}$. Let $L_1:[0,1]\ra M_1$ be the lifting of $L$ with $L_1(0)=L_1(1)=x_1$. Since $a_{2m-1}, a_{2m+1}\notin G$, we know that $L_1(\frac{1}{2})\neq x_1$. Therefore $L_1(\frac{1}{2})$ should be another preimage of $x$, called $x_2$. Obviously $x_2\in l_2$. So $L_1$ connects the two brunches $l_1$ and $l_2$ of the preimage of $l$, and intersects them transversally at exact $x_1,x_2$ respectively. Then for $\varepsilon>0$ small enough, $L_1|_{[\varepsilon, 1-\varepsilon]}$ is a continuous path connecting two points, which are located on different side of $l_1$. Therefore $M_1-l_1$ is path connected. So $l_1$ is nontrivial in homology.
\end{proof}

We remark that we can explain the construction of the covering space $M_1$ in a more straightforward way. Since $l$ is trivial in homology but nontrivial in homotopy, we can find a homologically nontrivial simple closed curve $\gamma$ on $M$, such that $\gamma$ transversally intersects $l$ at exactly two distinct points. For example, $\gamma$ can be chosen to be homotopic to a representative of the homotopy class $a_{2m}a_{2m+2}\in\pi_1(M,x)$.  \\

Now take two copies of $M$ and cut both of them along $\gamma$. Since $\gamma$ is nontrivial in homology, after this cutting, we get two copies of a connected surface with two boundaries $c_1$ and $c_2$, each of which is homeomorphic to $\mathbb{S}^1$. We attach $c_1$ of the first copy to $c_2$ of the second copy, and attach $c_2$ of the second copy to $c_1$ of the first copy. Then, we get a compact connected manifold $M_1$ without boundary, which is a two-fold covering space of $M$. Denote the covering map as $p_1: M_1\ra M$.
\vspace{-1cm}
\begin{center}
\hspace{-6ex}\includegraphics[width=12cm]{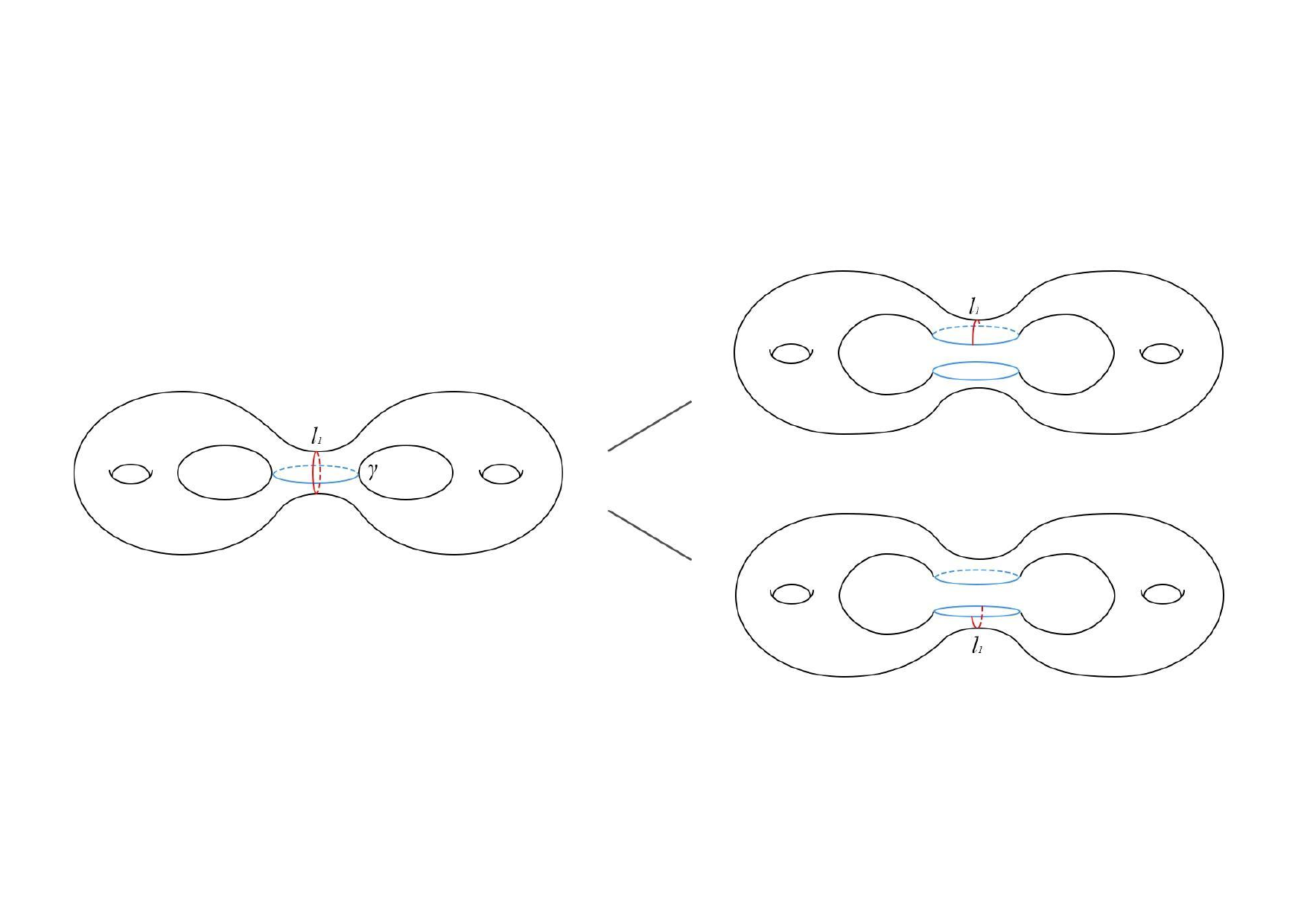}
\end{center}
\vspace{-1cm}
$l$ has two $p_1$-preimage on $M_1$, both are closed curves. Consider an arbitrary lifting of $l$, called $l_1$. If we cut $M_1$ along $l_1$, we get a connected surface with two boundaries homeomorphic to $\mathbb{S}^1$ (if we want to split $M_1$ into two parts, we have to cut $M$ along the union of the two preimages of $l$). This implies that $l$ is nontrivial in homology.\\

\begin{lem}\label{shortest geodesic}
Suppose $[l]\neq e\in H_1(M,\Z)$ and $l'$ is a simple closed geodesic, which is homologous to $l$ but not homotopic to $l$. Then there is a finite-fold covering space $M_2$ with covering map $p_2: M_2\ra M$ such that
\begin{enumerate}
\item Each preimage of $l$ is still a simple closed curve.
\item None of preimages of $l$ and preimages of $l'$ are homologous on $M_2$, i.e.~if $l_1$ is a preimage of $l$, $l'_1$ is a preimage of $l'$, then $[l_1]\neq [l'_1] \in H_1(M_2,\Z)$.
\end{enumerate}
\end{lem}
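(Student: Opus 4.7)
The plan is to reduce the problem to a question about finite covers of $\pi := \pi_1(M, x)$ and invoke the residual properties of surface groups. Fix a basepoint $x \in M$ together with paths from $x$ to $l$ and $l'$, producing based representatives $\alpha, \beta \in \pi$. The non-homotopy of $l$ and $l'$ translates to the statement that $\alpha$ and $\beta$ are not conjugate in $\pi$ (since free homotopy of closed loops corresponds to $\pi$-conjugacy), while $[l] = [l'] = h \in H_1(M, \Z)$ places $\gamma := \alpha\beta^{-1}$ in $[\pi,\pi] \setminus \{e\}$. I seek a finite-index normal subgroup $G \triangleleft \pi$ satisfying \textbf{(a)} $\alpha, \beta \in G$, and \textbf{(b)} $\tau\alpha\tau^{-1}\beta^{-1} \notin [G, G]$ for every $\tau \in \pi$. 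The covering $M_2 \to M$ corresponding to $G$ will then satisfy (1) because, by (a), each preimage of $l$ (or $l'$) is a deck-translate of a single lift and hence a homeomorphic copy of $l$ (or $l'$); and (2) because, by (b), for any preimage $l_1$ of $l$ and any preimage $l'_1$ of $l'$, their homology classes in $G^{ab} = H_1(M_2,\Z)$ differ by a nonzero element of the form $[\tau\alpha\tau^{-1}\beta^{-1}]$ for an appropriate $\tau \in \pi$.

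For (a): since $h \neq 0$ in $H_1(M,\Z) \cong \Z^{2g}$, the group $\pi/\langle\langle\alpha,\beta\rangle\rangle$ surjects onto the infinite abelian group $H_1(M,\Z)/\langle h\rangle$ and hence admits non-trivial finite quotients; pulling any such back gives a normal finite-index subgroup $G_0 \triangleleft \pi$ with $\alpha, \beta \in G_0$. For (b): there are only finitely many cosets in $\pi/G_0$, hence only finitely many instances of (b) to be verified. For each coset representative $\tau_i$ with $\tau_i\alpha\tau_i^{-1}\beta^{-1} \in [G_0,G_0]$, the offending element is nontrivial in $\pi$ (by the non-conjugacy of $\alpha$ and $\beta$), and the residual nilpotency of surface groups (Magnus) supplies a finite-index normal subgroup $H_i \triangleleft \pi$ in whose commutator subgroup this element does not lie. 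Setting $G := G_0 \cap \bigcap_i H_i$ yields the desired subgroup.

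The main obstacle will be the feedback between shrinking $G$ to secure (b) and thereby enlarging $\pi/G$, which in principle produces new coset representatives and new instances of (b) to verify. This is controlled by arranging the initial finite quotient defining $G_0$ so that the images of $\alpha$ and $\beta$ are non-conjugate in it -- possible because surface groups are conjugacy separable -- which bounds the set of problematic conjugates of $\alpha$ and ensures that the single finite refinement described above is sufficient. Once $G$ is produced, the conclusions (1) and (2) of the lemma follow exactly as outlined: the preimages of $l$ are the simple closed curves $\sigma \cdot l_1$ for $\sigma \in \pi/G$ by (a), and no such preimage is homologous in $M_2$ to any preimage $\sigma' \cdot l'_1$ of $l'$ by (b) applied with $\tau = (\sigma')^{-1}\sigma$.
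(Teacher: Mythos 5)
Your approach is genuinely different from the paper's. The paper concatenates $l$ with a based representative $s$ of $l'$, observes that $l * s^{-1}$ is homologically trivial but homotopically nontrivial, and then simply applies its Lemma~\ref{nontrivial geodesic} (with the cutting curve chosen disjoint from $l$) to get a concrete $2$-fold cover. You instead recast the entire statement as a pair of group-theoretic constraints on a finite-index normal subgroup $G \triangleleft \pi_1(M,x)$ and try to secure them via general residual properties of surface groups. The reduction itself is sound: condition (a) gives that $l$ and $l'$ lift to once-around simple closed loops in the regular cover corresponding to $G$, and condition (b) --- $\tau\alpha\tau^{-1}\beta^{-1}\notin[G,G]$ for all $\tau$ --- is exactly what is needed to separate all pairs of lifts in $H_1(M_2,\Z)=G^{\mathrm{ab}}$ after accounting for deck translates; and you correctly observe that (b) depends only on $\tau$ modulo $G$, so that once $G$ is fixed there are only finitely many instances to check.

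The gap is in the termination of the construction of $G$. After shrinking $G_0$ to $G = G_0\cap\bigcap_i H_i$ to kill the finitely many offending cosets of $G_0$, new cosets of $G$ appear, and your proposed control --- ``arrange the initial finite quotient defining $G_0$ so that the images of $\alpha$ and $\beta$ are non-conjugate in it'' --- is incompatible with condition (a). If $\alpha,\beta\in G_0$, then both have trivial (hence conjugate) image in $\pi/G_0$; conversely, any finite quotient in which $\bar\alpha$ and $\bar\beta$ are non-conjugate cannot arise from a $G_0$ containing both $\alpha$ and $\beta$. So conjugacy separability, as invoked, does not bound the problematic conjugates once (a) is imposed, and the feedback loop you flag is not actually closed. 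This is not a fatal flaw of the overall group-theoretic strategy --- one can imagine replacing this step, for instance by choosing $G$ so that $G/[G,G]$ maps onto a finite quotient of $\pi$ separating the conjugacy classes of $\alpha$ and $\beta$ --- but as written the argument does not terminate, and this is precisely the point at which the paper's much more hands-on construction (reducing to a single commutator word $l*s^{-1}$ and cutting along a single curve) avoids the difficulty entirely.

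Two smaller points worth noting. First, ``residual nilpotency'' by itself gives you a characteristic $N$ with a given nontrivial element outside $N$; to conclude the element lies outside $[H,H]$ for a finite-index $H$ you need to say a word more (e.g.\ use that $[H,H]\subseteq H$ and residual finiteness, or residual $p$-finiteness). Second, in your final sentence you apply (b) with $\tau=(\sigma')^{-1}\sigma$; this is correct but deserves the one-line justification that $[G,G]$ is normal in $\pi$ (being characteristic in the normal subgroup $G$), which is what lets you conjugate the relevant element back into the form $\tau\alpha\tau^{-1}\beta^{-1}$.
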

\begin{proof}
This is actually a corollary of the previous lemma. Note that by the homotopy lifting theorem (cf.~\cite{Ro}), If $l$ and $l'$ are homotopic to each other, we can not break this relation by lifting them to any covering spaces (no matter finite-fold or infinite-fold covering). However, if $l$ and $l'$ are just homologous but not homotopic to each other, the situation would be completely different. \\

Since $l'$ is homologous to $l$ but not homotopic to $l$, their difference in homotopy is a nontrivial commutator element. To be more precise, take a point $x\in l$ and let $s$ be a closed curve passing through $x$ and homotopic to $l'$, then the equivalence class $\alpha=[l*s^{-1}]\in \pi_1(M,x)$ is a nontrivial commutator element.\\

By Lemma \ref{nontrivial geodesic} we can find a 2-fold covering $M_2$, on which the preimages of $l*s^{-1}$ is nontrivial in homology. Therefore each preimages of $l$ and preimages of $l'$ are no longer homologous on $M$ (if both of them are still closed curves). To guarantee that preimages of $l$ are still simple closed curves, we just need to choose a special cutting $\gamma$, which does not intersect $l$. This is always feasible.

\end{proof}

We say a disjoint partition $\mathcal{A}$ of $h\neq e\in H_1(M,\Z)$ is nontrivial if it is consisting of more than one distinct closed curves.

\begin{lem}\label{shortest partition}
Suppose $h=[l]\neq e\in H_1(M,\Z)$ and $\mathcal{A}$ is a nontrivial disjoint partition of  $h$. Then there is a finite-fold cover space $M_3$ with covering map $p_3: M_3\ra M$ such that
\begin{enumerate}
\item Each preimage of $l$ is still a simple closed curve.
\item For each preimage $c$ of  $l$ on $M_3$, there is no disjoint partition of $[c]\in H_1(M_3,\Z)$ whose projection on $M$ is $\mathcal{A}$.
\end{enumerate}

\end{lem}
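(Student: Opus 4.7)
The plan is to construct a finite-fold cover $p_3 : M_3 \to M$ in which $l$ lifts to simple closed curves while at least one $l_j \in \mathcal{A}$ fails to admit a $1$-to-$1$ simple closed lift. This reduction suffices to prove the lemma: any disjoint partition $\hat{\mathcal{A}} = \{c_1,\ldots,c_n\}$ of $[c] \in H_1(M_3,\Z)$ whose projection under $p_3$ equals $\mathcal{A}$ must contain, for each $l_i \in \mathcal{A}$, a simple closed curve $c_i$ on $M_3$ satisfying $p_3 \circ c_i = l_i$ as parameterized curves; the absence of such a $1$-to-$1$ lift of any single $l_j$ immediately precludes the existence of $\hat{\mathcal{A}}$.

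To pick an appropriate $l_j$, I will use that $\mathcal{A}$ is nontrivial and contains at least two distinct simple closed curves, together with the constraint $\sum [l_i] = [l] \neq 0 \in H_1(M,\Z) \cong \Z^{2g}$. Since $g\geq 2$, every simple closed curve on $M$ has either primitive or trivial homology class. A short case analysis then yields, apart from a specific degenerate configuration, an $l_j \in \mathcal{A}$ with $[l_j]$ and $[l]$ linearly independent over $\Q$ in $H_1(M,\Q)$. In this generic case, for a suitable prime $N$ one constructs a character $\chi : \pi_1(M) \to \Z/N$ factoring through $H_1$ with $\chi([l]) = 0$ and $\chi([l_j]) \neq 0$. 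The corresponding finite abelian cover $M_3 \to M$ (with deck group $\Z/N$) then satisfies both conditions: $\chi([l]) = 0$ ensures that every preimage of $l$ is a simple closed curve, while $\chi([l_j]) \neq 0$ prevents $l_j$ from having a $1$-to-$1$ simple closed lift.

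The main obstacle is the degenerate configuration in which every $l_j$ satisfies $[l_j] \in \{[l], -[l], 0\} \subset H_1(M,\Z)$, so no abelian cover can separate $l$ from the $l_j$'s homologically. Here I appeal to the non-abelian structure of $\pi_1(M)$. The nontriviality of $\mathcal{A}$ provides some $l_j \neq l$ as a curve, and the combined structural constraints on $\mathcal{A}$ (pairwise disjointness, primitivity of homology classes on a surface of genus $g\geq 2$, and the summation $\sum[l_i] = [l]$) force at least one such $l_j$ to represent a $\pi_1$-conjugacy class distinct from that of $[l]$. Invoking P.~Scott's theorem that closed surface groups of genus $\geq 2$ are subgroup separable (LERF), the cyclic subgroup $\langle[l]\rangle$ can be separated from $[l_j]$ by a finite-index subgroup $H \leq \pi_1(M)$; passing to its normal core (after an appropriate choice of $H$ ensuring $[l]$ remains contained) produces the required finite-index normal subgroup $G \triangleleft \pi_1(M)$ with $[l]\in G$ and $[l_j]\notin G$, and hence the desired cover. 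Equivalently, the construction of Lemma~\ref{nontrivial geodesic} can be applied directly to a null-homologous loop of the form $\alpha \cdot \beta_j^{\pm 1}$ obtained by basing representatives of $[l]$ and $[l_j]$ at a common point, whose non-null-homotopy follows from the distinctness of the conjugacy classes; this furnishes a double cover with the required properties. The most delicate sub-case, where some $l_j$ is freely homotopic to $l$ (and hence cobounds an annulus with it), is the hardest to rule out using the "break some $l_j$" strategy, and must be excluded via the geometric constraints that $\mathcal{A}$ satisfies in the intended application of the lemma.
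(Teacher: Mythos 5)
Your reduction step is where the argument breaks down, and there is also a gap you yourself flag as unresolved.

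The reduction claims that any disjoint partition $\hat{\mathcal{A}}$ of $[c]$ on $M_3$ projecting to $\mathcal{A}$ must contain, for each $l_i$, a component $c_i$ with $p_3\circ c_i = l_i$ as parameterized curves, i.e.\ a degree-$1$ lift. That is not forced: a simple closed curve $\gamma$ on $M_3$ can project as a degree-$d$ cover of $l_i$ with $d>1$ whenever $l_i$ itself does not lift closed but $l_i^d$ does, and the paper's proof explicitly allows this possibility ($p_3(\gamma_j)=k_{j,i}\,l_i$ with $0<|k_{j,i}|\le n_i$). Consequently, killing only the degree-$1$ closed lift of some $l_j$ is not enough; one must block all closed lifts of $l_j$ of degree up to its multiplicity $n_j$ in $\mathcal{A}$. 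The paper does exactly this: it builds a finite-index normal $G\le\pi_1(M,x)$ via exponent-sum homomorphisms $\chi_1,\chi_2$ with a mod-$2$ compatibility, arranged so that $\alpha_0=[l]\in G$ while $a_j^m\notin G$ for all $0<|m|<2n_j$ ($j=1,2$). Your $\Z/N$-character with $\chi([l])=0$, $\chi([l_j])\ne0$ would need $N$ chosen larger than $n_j$ (and with care about how multiplicities combine) for the same effect, but the proposal neither states this nor incorporates it into the reduction, so as written the generic case does not close.

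The second problem is the degenerate configuration where every $[l_i]\in\{[l],-[l],0\}$, and in particular where some $l_j$ is freely homotopic to $l$. You acknowledge that the ``break some $l_j$'' strategy cannot distinguish a curve from its own free homotopy class under any covering, and defer this to ``geometric constraints in the intended application.'' But the lemma as stated is a general statement about disjoint partitions, not conditioned on how it is later applied, so this is a genuine unfilled hole rather than an omitted routine case. The paper avoids this by not trying to break a single $l_j$: its subgroup $G$ is determined by the pair $(l_1,l_2)$ of the partition and their multiplicities (and the paper, for its part, also restricts to the case that the $l_i$ are pairwise non-homologous, deferring the general case). In summary, your idea of separating $l$ from $\mathcal{A}$ in a finite cover is the same as the paper's, and your tools (finite abelian quotients, and separability of $\pi_1$ as a backup) are reasonable, but the degree bookkeeping in the reduction is wrong, and the hardest sub-case is explicitly left open.
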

\begin{proof}
The proof of this lemma is more difficult than the previous ones. In fact, a similar result has been presented in our previous work \cite{WX}, at here we will use the same idea in this proof. Suppose $\mathcal{A}=\{n_1l_1,\cdots,n_{k}l_{k}\}$, $k\geq 2$. Since $\mathcal{A}$ is a disjoint partition of $h$, then we have $$n_1[l_1]+\cdots+n_k[l_k]=h\in H_1(M,\Z).$$  Assume each pair of $l_i$, $l_j$ are not
homologous for $1\leq i< j\leq k$, the case that some pairs are homologous is more complicated but it can be solved using the same idea. Since $l_1,\cdots, l_k$ are pairwise disjoint and non-homologous nontrivial simple closed curve, we can choose a special set of generators of $H_1(M,\Z)$, such that $[l_1],\cdots,[l_k]\in H_1(M,\Z)$ are elements in this set of generators.\\

Take an arbitrary point $x\in l\subset M$, consider the homotopy class $\alpha_0=[l]\in \pi_1(M,x)$.  Let $s_1,\cdots,s_k$ be a set of simple closed curves passing through $x$ such that $s_i$ is homotopic to $l_i$ for all $i=1,\cdots,k$. Let $\{a_1,\cdots,a_{2g}\}\subset \pi_1(M,x)$ to be a set of generators of $\pi_1(M,x)$ such that $a_i=[s_i]\in\pi_1(M,x),~\forall~i=1,\cdots,k$. Then under this set of generators, $\alpha_0$ can be expressed in the shortest form as $$\alpha_0=a_{i_1}^{\varepsilon_1}a_{i_2}^{\varepsilon_2}\cdots a_{i_m}^{\varepsilon_m},$$ where $i_j\in\{1,\cdots,2g\},~j=1,\cdots,m$ and $\varepsilon_1,\cdots,\varepsilon_m\in \Z$. \\

For every homotopy class $a=a_{j_1}^{\varepsilon_{j_1}}\cdots a_{j_N}^{\varepsilon_{j_N}}\in \pi_1(M,x)$, we define $$\chi_t(a)=\sum_{j_i=t} \varepsilon_{j_i},~~t=1,\cdots, 2g.$$ Obviously, $\chi_t(a)$'s do not depend on the choice of expression of $a$. Then by the choice of this set of generators, we know that
$$\chi_t(\alpha_0)=n_t,~\forall~t\leq k;\hspace{.5cm}\&\hspace{.5cm}\chi_t(\alpha_0)=0, ~\forall~t>k.$$

Consider the subset:
\begin{equation*}
G=\{a\in \pi_1(M,x)~|~\chi_1(a)=t_1n_1,~\chi_2(a)=t_2n_2,~t_1+t_2\equiv 0\mod 2\}.
\end{equation*}
Then we have that:
\begin{enumerate}
\item $G$ is a normal subgroup of $\pi_1(M,x)$. This is because for any $a\in G$ and $b\in  \pi_1(M,x)$, $\chi_i(bab^{-1})=\chi_i(a)$, $i=1,2$.
\item $[\pi_1(M,x):G]=\varepsilon_1\times\varepsilon_2-2$. This is showed in \cite{WX} in the proof of Lemma 5.4.
\item $\alpha_0\in G$, since $\chi_1(\alpha_0)=n_1$, $\chi_2(\alpha_0)=n_2$.
\item $a_1^{m_1}, a_2^{m_2}\notin G$, for all $0<|m_1|<2n_1$ and $0<|m_2|<2n_2$, this can be seen by checking $\chi_t(a_1^{m_1})$ and $\chi_t(a_2^{m_2})$ for $t=1,2$.
\end{enumerate}\vspace{1ex}

So, there is a covering space $M_3$ with the covering map  $p_3: M_3\ra M$ such that
\begin{itemize}
\item $p_{3*}(\pi_1(M_3,z))=G$, where $z$ is an arbitrary preimage of $x$ under $p_3$.
\item $p_3: M_3\ra M$ is a finite-fold covering.
\end{itemize}

Since $\alpha_0\in G$, there is an element $\alpha'_0\in \pi_1(M_3,z)$ with $p_{3*}(\alpha'_0)=\alpha_0\in\pi_1(M,x)$. This implies that $l$ can be lifted to a simple closed curve $c\subset M_3$ passing through $z$, which is a representative of the class $\alpha'_0\in \pi_1(M_3,z)$. By considering the Deck transformations, it is easy to see all the preimages of $l$ are simple closed curves. \\

Next we show that for each preimage $c$ of $l$, there is no disjoint partition of $h'=[c]\in H_1(M_3,\Z)$ whose projection on $M$ is $\mathcal{A}$. We show this by contradiction. Assume there is a disjoint partition $\mathcal{B}=\{m_1\gamma_1,\cdots,m_J\gamma_J\}$ of $h'$ whose projection under $p_3$ is $\mathcal{A}$. We know that for each $\gamma_j$, $p_3(\gamma_j)=k_{j,i}l_i$ for some $l_i\in \mathcal{A}$ and $0<|k_{j,i}|\leq n_i$. Moreover, since $p_3(\mathcal{B})=\mathcal{A}$, there is at least one $j\in[1,\cdots,J]$ satisfying that $p_3(l_j)=k_{j,1}l_1$, for some $k_{j,1}$ with $0<|k_{j,1}|\leq n_1$. However since $|k_{j,1}|\leq n_1$, $a_1^{k_{j,1}}\notin G=p_{3*}(\pi_1(M_3,z))$. Then by the construction of $M_3$, there is no such simple closed curve $l_j$ on $M_3$. This is a contradiction.  We are done with the proof of this lemma.

\end{proof}

Now we are ready to show the proof of Theorem \ref{main theorem}. Recall that we denote $h=[l]\in H_1(M,\Z)$. If $h=e\in H_1(M,\Z)$, then by Lemma \ref{nontrivial geodesic} we can lift $l$ to a 2-fold covering space $M_1$ on which each preimage of $l$ is a homologically nontrivial simple closed curve. So in the following, we can assume $h\neq e\in H_1(M,\Z)$.\\

By lifting the geodesic flow to the universal covering space, we can observe that there are at most finitely many homotopy classes whose shortest representatives are shorter than $l$. Suppose these homotopy classes are $\alpha_1,\cdots,\alpha_k$, and $\{l_1,\cdots,l_k\}$ is a set of shortest representatives of $\alpha_1,\cdots,\alpha_k$ respectively. Note that the choice of  $l_1,\cdots,l_k$ is not unique if one of these classes has more than one shortest representatives. But if $l_i$ and $l'_i$ are two shortest representatives of $\alpha_i$, then they are homotopic to each other. So if on some covering space, a preimage of $l$ is not homologous to any of the liftings of $l_i$, then it is not homologous to any of the liftings of $l'_i$ too. \\

Apply Lemma \ref{shortest geodesic} iteratively, we can find a sequence of 2-fold coverings $$M'_k\ra M'_{k-1}\ra\cdots\ra M'_1\ra M,$$ such that on $M'_j$, any lifting of $l$ is a simple closed curve and is not homologous to any of the liftings of $l_1,\cdots,l_j$, $1\leq j\leq k$. Then, on $M'_k$ there is no simple closed curve homologous to $l$ with shorter arclength. For otherwise, if $\gamma\in M'_k$ is homologous to a lifting of $l$ and has shorter arclength, without loss of generality we assume it is shortest in its homotopy class. Then, the projection of $\gamma$ is homotopic to some $l_j$, $j=1,\cdots,k$. This contradicts to the construction of the sequence of covering spaces. So in the following, we assume that $l$ has shortest arclength among all simple closed representatives of $h\in H_1(M,\Z)$.\\

The rest part is to prove that there is a finite-fold covering space on which any preimage of $l$ is a shortest disjoint partition in its homology class. We define an equivalence relation between two disjoint partitions $\mathcal{A}_1=\{n_1l_1,\cdots,n_{k_1}l_{k_1}\}$ and $\mathcal{A}_2=\{n'_1l'_1,\cdots,n'_{k_2}l'_{k_2}\}$ of $h$, where $n_1,\cdots, n_{k_1}, n'_1,\cdots, n'_{k_2}\in\Z^{+}$, in the following way: we say $\mathcal{A}_1\sim\mathcal{A}_2$ if
\begin{enumerate}
\item $k_1=k_2$,
\item Up to a permutation of $\{l'_1,\cdots,l'_{k_2}\}$, $l_i$ is homotopic to $l'_i$ and $n_i=n'_i$ for all $i=1,\cdots, k_1$.
\end{enumerate}
It is easy to check that $\sim$ is an equivalence relation between disjoint partitions. For a given disjoint partition $\mathcal{A}$, we denote its equivalence class as $\Sigma_{\mathcal{A}}$. \\

For each equivalence of disjoint partitions, we define the length of this equivalence class to be the minimal total arclength among disjoint partitions in this equivalence class. It is easy to see that this minimal total arclength can be realized by a disjoint partition in its class. Similar to the previous discussion, we know that there are only finitely many equivalence classes of nontrivial disjoint partitions whose shortest total arclength is shorter than $l$, denote them $\Sigma_1,\cdots,\Sigma_k$. So, by Lemma \ref{shortest partition}, we can find a sequence of finite-fold coverings $$M''_k\ra M''_{k-1}\ra\cdots\ra M''_1\ra M,$$ such that the liftings of $l$ are all simple closed curves, and for each lifting $c_j$ of $l$ on $M''_j$, there is no disjoint partition of $[c_j]\in H_1(M''_j,\Z)$ whose projection on $M$ is in any class of $\Sigma_1, \cdots, \Sigma_j$, $j=1,\cdots,k$. For more details, please refer to \cite{WX}, the proof of Lemma 5.4.\\

Notice that on $M''_k$, any lifting $c$ of $l$ is shortest among all disjoint partitions in its homology class. So by Lemma by Lemma \ref{measure-partitions} (3), the preimage of $\mu$ which is evenly distributed on $(c, c')$ is a minimal measure in Mather's definition for the lifted geodesic flow on $TM''_k$. Take $M'=M''_k$, then Theorem \ref{main theorem} is proved.
\vspace{.3cm}

\section{SOME FURTHER DISCUSSIONS}
Theorem \ref{main theorem} shows that for the geodesic flows on surfaces of higher genus, all minimal ergodic measures in homotopical version which are distributed on closed trajectories are elements of $\mathcal{M}^*_L$. To simplify the notation, we call the ergodic measures distributed on closed trajectories \emph{the periodic measures}. So every minimal periodic measure in the homopotical version for the geodesic flows can be lifted to a Mather measure on some finite-fold covering spaces. In this section, we will consider the non-periodic minimal ergodic measures in homotopical version for the geodesic flows which have some hyperbolicity. The first case we are considering is the geodesic flow on the closed surface with negative curvature. It is well-known that, restricted on each energy level (such as the unit tangent bundle $SM$), the geodesic flow is a uniformly hyperbolic flow (cf.~\cite{Anosov}).

\subsection{Geodesic flows on surfaces with negative curvature}
Suppose $M$ is a closed surface with negative curvature. Obviously, the genus of $M$ is greater than one. A key observation is that in this case all geodesics are action-minimizers in the homotopical version since the surface with negative curvature admits no conjugate points on its universal covering space. Therefore, by the definition, all invariant measures of the geodesic flow are minimal measures in the homotopical version, i.e. $$\mathcal{M}'_L=\mathfrak{M}_{inv}.$$

Suppose $\mu$ is a non-periodic ergodic measure. By the discussion above, $\mu$ is a minimal ergodic measure in the homotopical version. Without loss of generality we assume $\mbox{supp}(\mu)\subset SM$, since the situations on other energy levels are just shifting of the situation on $SM$. We can take a trajectory $(\gamma,\gamma'):\R\ra SM$ in the support of $\mu$ satisfying that: there is sequence of positive numbers $0<t_1<t_2<\cdots<t_n<\cdots$ with $t_n\ra\infty$ as $n\ra \infty$ such that
\begin{enumerate}
\item $\epsilon_n:=d((\gamma(0),\gamma'(0)),(\gamma(t_n),\gamma'(t_n)))\ra 0$, as $n\ra\infty$, where $d$ is the distance under the Sasaki metric on $TM$.
\item Let $\nu_n$ be the probability measure evenly distributed on $(\gamma(t),\gamma'(t))|_{[0,t_n]}$, then $\nu_n\ra \mu$ as $n\ra\infty$.
\end{enumerate}
\vspace{1ex}

Since $\epsilon\ra 0$, by Anosov closing lemma (cf.~\cite{Anosov}), for sufficiently large $n$, there is a closed trajectory $(c_n, c'_n):\R\ra SM$, whose period $T_n$ satisfies that $|T_n-t_n|\leq 2 \epsilon$, such that $$d((\gamma(t),\gamma'(t)),(c(t),c'(t)))<\delta_n$$ for all $0\leq t\leq \min\{t_n, T_n\}$, for some small constant $\delta_n$ with $\delta_n\ra 0$ as $n\ra \infty$. Let $\mu_n$ be the ergodic measure evenly distributed on $(c_n,c'_n)$. Then one can check that $$\textbf{d}(\mu_n,\nu_n)\ra 0,~\mbox{as}~ n\ra\infty,$$ where $\textbf{d}$ is the distance in the space of probability measures which is compatible the the weak-* topology (cf.~\cite{Walters}). Therefore we have: $$\mu_n\ra \mu,~\mbox{as}~n\ra\infty.$$

Notice that each $\mu_n$ is distributed on a closed trajectory, then $\mu_n$ is a minimal measure in the homotopical version for every $n$. By Theorem \ref{main theorem}, we know that $\mu_n\in\mathcal{M}^*_L$. Therefore $\mu\in \overline{\mathcal{M}^*_L}$. Combining with case of periodic ergodic measures we have considered in Theorem \ref{main theorem}, we know that for the geodesic flows on compact surfaces with negative curvature, all minimal ergodic measures in the homotopical version, no matter they are periodic or non-periodic, are in the closure of $\mathcal{M}^*_L$. \\

Now we consider general invariant measures. It is showed in \cite{Sigmund} that, on the unit tangent bundle $SM$, the set of ergodic measures is a dense $G_{\delta}$ subset of the set of invariant measure. This proof is based on the specification property which is a consequence of the uniform hyperbolicity of the geodesic flows on negatively curved surfaces. So all invariant measures can be approximated by ergodic measures, and then approximated by periodic measures. By the shifting property, this result is valid on all energy levels. To conclude, we get that $\overline{\mathcal{M}^*_L}=\mathfrak{M}_{inv}=\mathcal{M}'_L$. This is the following theorem:

\begin{thm}\label{7.1}
For the geodesic flows on compact surfaces with negative curvature, $\mathcal{M}^{*}_{L}$ is a dense subset of $\mathfrak{M}_{inv}$.
\end{thm}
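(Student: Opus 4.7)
The strategy is to combine the Main Theorem with classical density results for hyperbolic flows. Since $M$ has negative curvature, its universal cover has no conjugate points, so every geodesic segment is the unique shortest curve in its homotopy class of fixed endpoints. Consequently every geodesic is an action-minimizer in the homotopical version, and the equality $\mathcal{M}'_L = \mathfrak{M}_{inv}$ holds. It therefore suffices to show that the set of periodic ergodic measures supported on closed geodesics is dense in $\mathfrak{M}_{inv}$: by Theorem \ref{main theorem} each such measure belongs to $\mathcal{M}^{*}_L$ (taking the covering space supplied by the theorem), so the closure of $\mathcal{M}^{*}_L$ will contain $\mathfrak{M}_{inv}$.

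I would carry out the density reduction in two stages. First, fix an energy level, say $SM$. By the Anosov property of the geodesic flow on negatively curved surfaces, one has the specification property, and Sigmund's theorem (as cited in \cite{Sigmund}) tells us the ergodic measures form a dense $G_\delta$ subset of $\mathfrak{M}_{inv}(SM)$. So any invariant measure is a weak-$*$ limit of ergodic measures. Second, for any ergodic measure $\mu$ on $SM$, pick a generic orbit $(\gamma, \gamma')$ and a sequence of return times $t_n \to \infty$ along which $(\gamma(t_n),\gamma'(t_n)) \to (\gamma(0),\gamma'(0))$ in the Sasaki metric, chosen so that the empirical measures $\nu_n$ on $(\gamma,\gamma')|_{[0,t_n]}$ converge weakly-$*$ to $\mu$. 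The Anosov closing lemma then produces closed geodesics $(c_n,c'_n)$ of period $T_n$ with $|T_n - t_n| \to 0$ that shadow $(\gamma,\gamma')|_{[0,t_n]}$ within distance $\delta_n \to 0$. Standard estimates for weak-$*$ distances of empirical measures give $\mathbf{d}(\mu_n,\nu_n) \to 0$ where $\mu_n$ is the periodic measure on $(c_n,c'_n)$, hence $\mu_n \to \mu$.

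To extend from $SM$ to all of $TM$, I would invoke the shifting property of invariant measures for geodesic flows: any invariant measure decomposes as a measurable family over energy levels, and on each nontrivial level the flow is, after rescaling, conjugate to the one on $SM$. Density of periodic ergodic measures on $SM$ transfers to density on each energy shell, and the zero section contributes only the measure supported on fixed points, which is easily approximated as well. Then by Theorem \ref{main theorem} every periodic ergodic measure lies in $\mathcal{M}^{*}_L$, so $\mathfrak{M}_{inv} \subset \overline{\mathcal{M}^{*}_L}$, proving the theorem.

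The main obstacle is the passage from orbit shadowing (which is what the Anosov closing lemma provides) to weak-$*$ convergence of the associated empirical and periodic measures. One must verify that uniform $C^0$-closeness of the two orbits over a long time interval, together with a comparable period, implies that integrals of any fixed continuous test function against $\nu_n$ and $\mu_n$ differ by $o(1)$; the bookkeeping of the small period defect $|T_n - t_n|$ requires a little care. Everything else is either already in the discussion preceding the theorem or is standard in the hyperbolic dynamics literature.
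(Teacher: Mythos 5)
Your argument follows the paper's proof almost verbatim: identify $\mathcal{M}'_L=\mathfrak{M}_{inv}$ via absence of conjugate points, reduce to density of periodic measures on $SM$ using Sigmund's theorem, approximate ergodic measures by periodic ones via the Anosov closing lemma, extend to all energy levels by the shifting property, and finally invoke Theorem~\ref{main theorem} to place each periodic minimal measure in $\mathcal{M}^*_L$. The one technical step you flag as needing care --- that shadowing implies $\mathbf{d}(\mu_n,\nu_n)\to 0$ --- is also left at the ``one can check'' level in the paper, so your level of rigor is comparable.
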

\vspace{.5ex}

\subsection{Geodesic flows on rank 1 surfaces with non-positive curvature}
Now we consider a geodesic flow with weaker hyperbolicity. Suppose $M$ is a closed surface of genus $g>1$, equipped with a Riemannian metric $G$ whose curvature is everywhere non-positive. We know that, in this case, all geodesics on $M$ are action-minimizers in the homotopical version since the surface with non-positive curvature does not admit any conjugate points on its universal covering space. Therefore we still have $\mathcal{M}'_L=\mathfrak{M}_{inv}$. Since we are going to consider properties related to the entropy, so in this subsection we restrict the geodesic flow on the unit tangent bundle $SM$ which is compact. We use $\mathcal{M}_L(1), \mathcal{M}^*_L(1), \mathcal{M}'_L(1)$ to denote the subset of $\mathcal{M}_L, \mathcal{M}^*_L, \mathcal{M}'_L$ whose elements are supported on $SM$, respectively.\\

In addition, we assume $(M, G)$ is a rank 1 surface, which means that it admit at least one rank 1 geodesic. Here we say a geodesic $\gamma$ is of \emph{rank 1} if there is no parallel perpendicular Jacobi field along $\gamma$, and a unit tangent vector $v$ is of \emph{rank 1} if the geodesic determined by $v$ is of rank 1. We use $\textbf{Reg}$ to denote the set of all rank 1 unit tangent vectors and $\textbf{Sing}$ to denote the compliment of $\textbf{Reg}$. Due to the existence of points with zero curvature, the geodesic flow is no longer uniformly hyperbolic. However, on rank 1 manifolds with non-positive curvature, the geodesic flows are non-uniformly hyperbolic, and many hyperbolic properties are still valid if we restrict the geodesic flow to $\textbf{Reg}$. In \cite{CS}, Coud\`{e}ne and Schapira showed that all invariant measures on $\textbf{Reg}$ can be approximated by periodic measures. Then, by Theorem \ref{main theorem}, $\overline{\mathcal{M}^*_L}$ contains all invariant measures supported on $\textbf{Reg}$, i.e.~ $$\mathfrak{M}_{inv}(\textbf{Reg})\subset\overline{\mathcal{M}^*_L(1)}.$$

Since the genus of $M$ is at least 2, by \cite{Dina}, the geodesic flow restricted on $SM$ has a positive topological entropy $h>0$. In \cite{Kn} Knieper proved that there is a unique measure of maximal entropy $\mu_{max}$ whose support is inside $\textbf{Reg}$.   Therefore, $\mu_{max}$ can be approximated by periodic measures, i.e. $\mu_{max}\in\overline{\mathcal{M}^*_L(1)}$.  So we have $$\max\{h_{meas}(\mu)~|~\mu\in\overline{\mathcal{M}^*_L(1)}\}=h.$$
Moreover, since periodic measures have 0 entropy and are dense in $\mathfrak{M}_{inv}(\textbf{Reg})$, it is straightforward that the measure-theoretic entropies of the elements in $\overline{\mathcal{M}^*_L(1)}$ will cover the whole interval $[0,h]$. \\

In addition, we know that the topological entropy of the geodesic flow restricted on $\textbf{Sing}$ is strictly less than $h$ (the entropy gap, cf.~\cite{Kn}). Then ergodic measures with entropy greater than the topological entropy on $\textbf{Sing}$ are all supported on $\textbf{Reg}$, therefore are contained in $\overline{\mathcal{M}^*_L(1)}$. In short, ergodic measures with large entropy can be approximated by projections of classic minimal measures on finite-fold covering spaces.\\

We conclude the above results in the following theorem.
\begin{thm}\label{7.2}
Suppose $(M,G)$ is a rank 1 surface of higher genus with non-positive curvature. Let $h>0$ be the topological entropy of the geodesic flow on $SM$. Then the following properties hold:
\begin{enumerate}
\item $\mathcal{M}^{*}_{L}(1)\cap \mathfrak{M}_{inv}(\textbf{Reg})$ is a dense subset of $\mathfrak{M}_{inv}(\textbf{Reg})$.
\item For each $a\in[0,h]$, there is an invariant measure $\mu\in\overline{\mathcal{M}^*_L(1)}$ satisfying that $h_{meas}(\mu)=a$.
\item There is a constant $h'\in(0,h)$ such that for any ergodic measure $\mu$ with $h_{meas}(\mu)>h'$, we have $\mu\in\overline{\mathcal{M}^*_L(1)}$.
\end{enumerate}
\end{thm}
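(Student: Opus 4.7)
For part (1), I would combine Theorem \ref{main theorem} with the specification-type density result of Coud\`{e}ne--Schapira. Since $(M,G)$ has non-positive curvature, its universal cover has no conjugate points, and therefore every geodesic segment is an action-minimizer in the homotopical version. In particular, every ergodic measure supported on a closed rank 1 geodesic is a homotopically minimal periodic measure to which Theorem \ref{main theorem} applies, so it lies in $\mathcal{M}^*_L$. Such periodic measures are supported in $\textbf{Reg}$, hence lie in $\mathcal{M}^*_L(1)\cap \mathfrak{M}_{inv}(\textbf{Reg})$. The Coud\`{e}ne--Schapira result from \cite{CS} asserts that these periodic measures form a dense subset of $\mathfrak{M}_{inv}(\textbf{Reg})$, giving (1).

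For part (2), I would use Knieper's unique measure of maximal entropy $\mu_{max}$, whose support is contained in $\textbf{Reg}$ and whose metric entropy equals $h$; part (1) then gives $\mu_{max}\in \overline{\mathcal{M}^*_L(1)}$. Pick any periodic measure $\mu_0\in \mathfrak{M}_{inv}(\textbf{Reg})$ (such measures exist, for example by the density statement in (1)), so that $\mu_0\in \overline{\mathcal{M}^*_L(1)}$ and $h_{meas}(\mu_0)=0$. The convex combinations $\mu_t=(1-t)\mu_0+t\mu_{max}$ remain in $\mathfrak{M}_{inv}(\textbf{Reg})$ for every $t\in[0,1]$, since $\textbf{Reg}$ is invariant and $\mathfrak{M}_{inv}(\textbf{Reg})$ is convex, and hence remain in $\overline{\mathcal{M}^*_L(1)}$ by (1). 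Affinity of the measure-theoretic entropy on the space of invariant probability measures yields $h_{meas}(\mu_t)=th$, continuously covering $[0,h]$.

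For part (3), I would let $h'$ be the topological entropy of the geodesic flow restricted to the closed invariant set $\textbf{Sing}$; Knieper's entropy gap theorem gives $h'<h$. Given any ergodic $\mu$ with $h_{meas}(\mu)>h'$, the invariance of both $\textbf{Reg}$ and $\textbf{Sing}$ together with ergodicity forces $\mu(\textbf{Reg})\in\{0,1\}$. The variational principle, applied to $\varphi_t$ restricted to $\textbf{Sing}$, rules out $\mu(\textbf{Sing})=1$, since this would force $h_{meas}(\mu)\leq h'$. Therefore $\mu\in \mathfrak{M}_{inv}(\textbf{Reg})$, and part (1) immediately yields $\mu\in\overline{\mathcal{M}^*_L(1)}$.

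The step requiring the most care is the interaction between the density statement in (1) and the way $\mathcal{M}^*_L$ is assembled in Theorem \ref{main theorem}: different periodic approximants will in general be lifted through different finite-fold covering spaces, so no single covering space will witness membership in $\mathcal{M}^*_L$ for an entire approximating sequence at once. This is not an actual obstruction because $\mathcal{M}^*_L$ is defined as the union over all finite covers, but it does mean that the argument must be carried out measure-by-measure rather than by producing a single uniform lift, and one must keep this in mind when passing to the closure in the weak-$*$ topology on $TM$ (not on any fixed cover).
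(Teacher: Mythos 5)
Your argument follows the paper's proof essentially line by line: part (1) is the Coud\`{e}ne--Schapira density result combined with Theorem \ref{main theorem}; part (3) is Knieper's entropy gap plus ergodicity and the variational principle on $\textbf{Sing}$. For part (2), you are slightly more explicit than the paper: where the paper merely remarks that ``it is straightforward'' that entropies fill $[0,h]$, you spell out the mechanism — a periodic measure $\mu_0$ with zero entropy inside $\textbf{Reg}$, the convex path $\mu_t = (1-t)\mu_0 + t\mu_{max}$ staying inside the convex set $\mathfrak{M}_{inv}(\textbf{Reg})\subset\overline{\mathcal{M}^*_L(1)}$, and affinity of measure entropy giving $h_{meas}(\mu_t)=th$. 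This is a correct and welcome unpacking of the paper's assertion, not a different route. Your closing remark about covers being chosen measure-by-measure rather than uniformly is accurate and consistent with how $\mathcal{M}^*_L$ is defined as a union over all finite covers; it does not affect the validity of passing to the closure in the weak-$*$ topology on $TM$.
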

We remark that all the results in Theorem \ref{7.2} can be extended to the geodesic flows on rank 1 surfaces without focal points, based on recent works \cite{LWW1,LLW}. We omit this discussion at here.

\vspace{.5cm}

\textbf{Acknowledgements.} We would like to thank Professor Xuezhi Zhao at Capital Normal University for some useful comments and suggestions.
\vspace{.5cm}

\end{document}